 \newcommand{\beq}{\begin{equation}}
\newcommand{\eeq}{\end{equation}}
\numberwithin{equation}{section}
\newtheorem{theorem+}           {Theorem}   
\newtheorem{definition+}    {Definition}
\newtheorem{lemma+}    {Lemma}
\newtheorem{corollary+}    {Corollary}
\newtheorem{proposition+}  {Proposition}
\newtheorem{example+}    {Example}
\newenvironment{theorem}{\begin{theorem+}\sl}{\end{theorem+}\rm}
\newenvironment{definition}{\begin{definition+}\rm}{\end{definition+}\rm}
\newenvironment{lemma}{\begin{lemma+}\sl}{\end{lemma+}\rm}
\newenvironment{corollary}{\begin{corollary+}\sl}{\end{corollary+}\rm}
\newenvironment{proposition}{\begin{proposition+}\sl}{\end{proposition+}\rm}
\newenvironment{example}{\begin{example+}\rm}{\end{example+}\rm}
\newenvironment{proof}{\medbreak\noindent{\it Proof.}\rm}{\hfill$\square$\rm}
\newcommand{\Psvx}{{ \Psi_{v,x}}}
\newcommand{\Pspx}{{ \Psi_{\vph,x}}}
\newcommand{\Psuo}{{ \Psi_{u,0}}}
\newcommand{\nux}{\nu_u(x)}
\newcommand{\nuxa}{\nu_u(x,a)}
\newcommand{\tnuph}{\tilde\nu(u,\vph)}
\newcommand{\nuph}{\nu(u,\vph)}
\newcommand{\suph}{\sigma(u,\vph)}
\newcommand{\mrph}{\mu_r^\vph}
\newcommand{\obl}{\Omega}
\newcommand{\okn}{1\le k\le n}
\newcommand{\vph}{\varphi}
\renewcommand{\Bbb}{\mathbb}
\newcommand{\Z}{{\Bbb  Z}}
\newcommand{\R}{{ \Bbb R}}
\newcommand{\Rnm}{{ \Bbb R}_-^n}
\newcommand{\Rnp}{{ \Bbb R}_+^n}
\newcommand{\C}{{\Bbb  C}}
\newcommand{\Cn}{{\Bbb  C\sp n}}
\newcommand{\D}{{\Bbb D}}
\newcommand{\N}{{\cal N}}
\newcommand{\cO}{{\cal O}}
\newcommand{\cH}{{\cal H}}
\newcommand{\cJ}{{\cal J}}
\newcommand{\cI}{{\cal I}}
\newcommand{\m}{{\mathfrak m}}
\begin{document}

\baselineskip=17pt

\begin{center}
{\Large\bf An extremal problem for generalized Lelong numbers}
\end{center}

\begin{center}
{\large Alexander Rashkovskii}
\end{center}

\begin{abstract} We look for pointwise bounds on a
plurisubharmonic function near its singularity point, given the
value of its generalized Lelong number with respect to a
plurisubharmonic weight. To this end, an extremal problem is considered. In certain cases, the problem is solved explicitly.

\vskip.2cm {\sl Subject classification}: 32U05, 32U25, 32U35, 13H15.

{\sl Key words:} plurisubharmonic function, generalized Lelong
number, pluricomplex Green function.

\end{abstract}

\section{Introduction}\label{sec:intro}

Let $\m=\{f\in\cO_0:\:f(0)=0\}$ be the maximal ideal in the local ring $\cO_0$ of germs of analytic functions at $0\in\Cn$. It is not hard to see that the mixed multiplicity $e_{n-1}(\cJ,\m)$ (due to Teissier and Risler \cite{T} and Bivi\`{a}-Ausina \cite{BA}) of an ideal $\cJ$ of $\cO_0$ and $n-1$ copies of $\m$ is at least $p$, $p\in\Z_+$, if and only if $\cJ\subset\m^p$; this follows from the fact that $e_{n-1}(\cJ,\m)$ equals the minimal multiplicity of functions $f\in\cJ$.

Let now $\cI$ be an $\m$-primary ideal of $\cO_0$ (which means that the common zero set of functions from $\cI$ is $\{0\}$); what can be said about an ideal $\cJ$ if $e_{n-1}(\cJ,\cI)\ge p$?

If the ideal $\cI$ is generated by functions $g_1,\ldots,g_l$ ($l\ge n$) and the ideal $\cJ$ is generated by functions $f_1,\ldots,f_m$, then the value $e_{n-1}(\cJ,\cI)$ equals the residual mass of the Monge-Amp\`ere current $dd^c\log|f|\wedge(dd^c\log|g|)^{n-1}$ at $0$, which is, actually, the generalized Lelong number (due to Demailly) $\nu(\log|f|,\log|g|)$ of the plurisubharmonic function $\log|f|$ with respect to the plurisubharmonic weight $\log|g|$; here $|f|^2=\sum|f_j|^2$ and similarly for $g$ and all other vector functions (mappings) below.

Now we can restate our question in the category of plurisubharmonic functions as one on asymptotic behavior of the functions with given values of their generalized Lelong numbers: {\sl Given a plurisubharmonic weight $\vph$, what can be said about (the asymptotic of) a plurisubharmonic function $u$
near $\vph^{-1}(-\infty)$ if $\nuph\ge c$?}
\medskip

When $\vph(z)=\log|z-x|$, $x\in\Cn$, the value $\nu(u,\vph)$ is the classical Lelong number $\nu_u(x)$ of $u$ at $x$. In this case, we have the bound $u(z)\le \nu_u\log|z-x|+O(1)$
as $z\to x$, and $\nu_u(x)\ge c$ if and only if $u\le c\,\log|z-x|+O(1)$ near $x$, which is a plurisubharmonic analogue to the remark above on $e_{n-1}(\cJ,\m)$.

More generally, let $a\in\Rnp$ and
\beq
\phi_{a,x}(z)=\max_k \,a_k^{-1}\log|z_k-x_k|. \label{eq:phax} \eeq
Then $u(z)\le\nuxa\phi_{a,x}(z) + O(1)$ with $\nuxa$ the directional Lelong
numbers due to Kiselman at $x$ for  $a\in\Rnp$.
In terms of Demailly's generalized Lelong numbers $\nuph$ with
respect to plurisubharmonic weights $\vph$, this gives $$u\le
\tau^{-1} \nu(u,\phi_{a,x})\phi_{a,x} +O(1),$$ where $\tau=(a_1\dots
a_n)^{-1}= (dd^c\phi_{a,x})^n(\{x\})$ is the Monge-Amp\`ere mass of
$\phi_{a,x}$ at $x$.

In the general case, {\it the relation} \beq\label{eq:conj} u(z)\le
\tau_\vph^{-1} \nu(u,\vph)\vph(z) +O(1),\quad z\to x,\eeq
$\tau_\vph$ been the residual mass of $(dd^c\vph)^n$ at
$x=\vph^{-1}(-\infty)$, {\it need not be true} for all $u$, even for the weights $\vph$
that are maximal outside $x$.

\begin{example} Let $\vph=\log|f|$ with $f(z_1,z_2)=(z_1^3-z_2^3,z_1z_2)$;
we have $(dd^c\vph)^2=6\delta_0$. The function $u(z)=\log|z_1|$
satisfies $\nuph=3$, so the inequality (\ref{eq:conj}) would take
the form $\log|z_1|\le \frac12\vph +O(1)$, which is not true for
$z=(z_1,0)$, $z_1\to 0$.
\end{example}

To answer our question for an arbitrary maximal weight $\vph$, we consider an extremal problem whose solution
$d_\vph$ gives the best upper bound for functions $u$ satisfying $\nu(u,\vph)\ge c$:
\beq\label{eq:mr}u(z)\le
c\,\tau_\vph^{-1}d_\vph(z) +O(1).\eeq
The extremal function $d_\vph$ is
plurisubharmonic and maximal outside the singularity point of
$\vph$, so it is a fundamental solution for the complex Monge-Amp\`ere operator $(dd^c)^n$.
It turns out that, unless $\vph$ is such that (\ref{eq:conj}) is true for all $u$, one cannot {\it characterize} the condition $\nu(u,\vph)\ge c$ by means of any upper bound on $u$. That is why relation (\ref{eq:mr}) need not imply $\nu(u,\vph)\ge c$; what it does imply is that $u$ is the upper envelope of the family of functions with this property.

Then we turn to the question of explicit construction of this extremal function.
Let us assume that a weight $\vph$ has asymptotically analytic behavior, that is, for any $\epsilon >0$, $$(1+\epsilon)c_\epsilon\log|g_\epsilon|\le\vph\le (1-\epsilon)c_\epsilon\log|g_\epsilon|\quad{\rm near\ }x$$ with $g_\epsilon$ a holomorphic mapping, $g_\epsilon(x)=0$, and $c_\epsilon>0$. The class of such weights is quite large; actually, we have no example of a maximal weight that does not have asymptotically analytic singularity.
 Given such a weight $\vph$, the function $d_\vph$ can be constructed by means of plurisubharmonic functions $\phi_i$ generating so called {\it Rees valuations}, i.e., generic multiplicities of pullbacks of analytic functions on exceptional primes $E_i$ of log resolutions. More precisely, $d_\vph$ is presented as the (regularized) upper envelope of the family of largest plurisubharmonic minorants for the functions $\min_i\gamma_i\phi_i$, where  $\gamma_i>0$ are such that $\sum m_i\gamma_i=1$ for certain $m_i$ determined by the weight $\vph$. As a consequence, it is continuous (as a map to $\R\cup\{-\infty\}$) and has asymptotically analytic singularity.

When $\vph$ has homogeneous (in logarithmic
coordinates) singularity, the asymptotic of $d_\vph$ can be
determined explicitly; this reflects, in particular, the fact that the largest plurisubharmonic minorant for the minimum of homogeneous plurisubharmonic functions can be computed easily. The extremal function is found by means of an integral representation for the generalized Lelong numbers with homogeneous weights \cite{R4}.  The asymptotic turns out to be simplicial, equivalent to $\phi_{a,x}$ with $a\in\Rnp$ such that $a_k=\tau_\vph^{-1} \nu(\log|z_k-x_k|,\vph)$, $1\le k\le n$.

A weight $\vph=\log|g|$ with analytic singularity given by a mapping $g:\C_0^n\to\C_0^n$ is equivalent to a homogeneous weight if and only if the multiplicity of $g$ at $0$ equals $n!$ times the covolume of the Newton polyhedron of $g$ at $0$ \cite{R7}, and the latter holds, by Kouchnirenko's theorem, for generic mappings $g$ with given Newton polyhedron. Furthermore, if all the components of $g$ are monomials, then $\vph=\log|g|$ is homogeneous. Thus, the above result on homogeneous singularities solves explicitly the problem on ideals $\cJ$ satisfying the condition $e_{n-1}(\cJ,\cI)\ge p$ for a {\it monomial} ideal $\cI$.

In addition, the homogeneous case indicates that
the functions $d_\vph$ form rather a small subclass of Green-like
plurisubharmonic functions, containing, of course, the class of weights $\vph$ for which (\ref{eq:conj})
holds true for every plurisubharmonic function $u$ (and thus $d_\vph=\vph+O(1)$); we call such
functions {\it flat}. In particular, a homogeneous weight is shown to be flat if
and only if it has simplicial asymptotic. So, for homogeneous weights, the functions $d_\vph$ are precisely flat weight. It is plausible to conjecture that this holds in general situation as well, however we were not able to prove it.

We characterize flat weights $\vph$ by the relation
$$\nu(\max_j\,u_j,\vph)=\min_j\,\nu(u_j,\vph)$$ for all
plurisubharmonic functions $u_j$. The proof is based on the following result that has independent interest: If a plurisubharmonic function $u$ dominates maximal weight $\vph$ and $\nu(u,\vph)=\nu(\vph,\vph)$, then $u=\vph+O(1)$.
A sufficient "inner" condition for a
weight to be flat is also given. In addition, flat weights are used
to get upper bounds for $d_\vph$ in the general case.

\medskip
The paper is organized as follows. The next section contains basic facts
on generalized Lelong numbers and Green functions. In Section~\ref{sec:extr} we construct the
extremal function $d_\vph$ as the upper envelope of negative plurisubharmonic
functions $u$ with given value of $\nuph$. This function is computed
in Section~\ref{sec:ahw} for the case of homogeneous singularity, and in Section~\ref{sec:tame} it is described for weights with asymptotically analytic singularities.
In Section~\ref{sec:flat} we study the class of flat weights.
Finally, in the last section we address a few open questions.

\section{Preliminaries}\label{sec:pre}

Given a domain $\obl\subset\Cn$, let $PSH(\obl)$ denote the class of
all plurisubharmonic functions on $\obl$ and $PSH^-(\obl)$ be its
subclass consisting of the nonpositive functions. If $x\in\Cn$,
$PSH_x$ will mean the collection of all germs of plurisubharmonic
functions near $x$.

We recall that a function $u\in PSH(\obl)$ is called {\it maximal}
on $\obl$ if for any $v\in PSH(\obl)$ the relation $\{v>u\}\Subset
\obl$ implies $v\leq u$ on $\obl$. A locally bounded $u$ is maximal on $\obl$ if and only if
$(dd^cu)^n\equiv 0$ there.

\subsection{Lelong numbers and generalizations}\label{ssec:LN}

The Lelong number of $u\in PSH_x$ at the point $x\in\Cn$ is
$$
\nux=\lim_{r\to 0}\int_{|z-x|<r} dd^c u\wedge (dd^c\log|z-x|)^{n-1};
$$
here $d=\partial + \bar\partial,\ d^c= ( \partial
-\bar\partial)/2\pi i$.  It can also be calculated as
\beq
\label{eq:form0} \nux=\liminf_{z\to x}\frac{u(z)}{\log|z-x|}
\eeq
and
\beq
\label{eq:form1} \nux=\lim_{r\to -\infty}r^{-1}\lambda_u(x,r), \eeq
where $\lambda_u(x,r)$ is the mean value of $u$ over the sphere
$|z-x|=e^r$, see \cite{Kis1}.
\medskip

A more detailed information on the behavior of $u$ near $x$ can be
obtained by means of the {\it directional Lelong numbers} due to
Kiselman \cite{Kis2}
$$
\nuxa = \liminf_{z\to x}\frac{u(z)}{\phi_{a,x}(z)}=
\lim_{r\to-\infty}r^{-1}\lambda_u(x,ra), $$
where $a=(a_1,\ldots,a_n)\in\Rnp$, the function $\phi_{a,x}$ is defined by (\ref{eq:phax}), and
$\lambda_u(x,ra)$ is the mean value
of $u$ over the set $\{z:\: |z_k-x_k|=e^{ra_k},\ \okn\}$. In
particular, $\nux=\nu_u(x, (1,\ldots,1))$.
\medskip

A general notion of the Lelong number with respect to a
plurisubharmonic weight was introduced and studied by J.-P.~Demailly
\cite{D1}, \cite{D}. Let $\vph$ be a continuous plurisubharmonic
function near $x\in\Cn$, locally bounded outside $x$, and
$\vph^{-1}(-\infty)=\{x\}$; we can assume $\vph\in PSH(\Cn)\cap
L_{loc}^\infty(\Cn\setminus\{x\})$. Such functions are called
{\it weights} (centered at $x$), and the collection of the weights will be denoted by $W_x$. A weight $\vph\in W_x$ is be called {\it maximal} if it is a maximal plurisubharmonic function on a punctured
neighborhood of $x$ (i.e., satisfies $(dd^c\vph)^n=0$ outside $x$). We denote the class of all
maximal weights centered at $x$ by $MW_x$; when we want to specify that $\vph$ is maximal on
$\omega\setminus\{x\}$, we write $\vph\in MW_x(\omega)$.

Denote $B_r^\vph=\{z:\:\vph(z)<r\}$. The value \beq \label{eq:gen}
\nuph=\lim_{r\to -\infty}\int_{B_r^\vph} dd^c u\wedge
(dd^c\vph)^{n-1}= dd^c u\wedge (dd^c\vph)^{n-1}(\{x\})\eeq is called
{\it the generalized Lelong number}, or {\it the Lelong--Demailly
number}, of $u$ with respect to the weight $\vph$.

We list here some basic tools for dealing with the generalized Lelong numbers with respect to weights $\vph\in W_x$, see details in \cite{D}.

\begin{theorem}\label{theo:Dsc} If
$u_k\to u$ in $L^1_{loc}$, then
$\limsup_{k\to\infty}\nu(u_k,\vph)\le \nu(u,\vph)$.
\end{theorem}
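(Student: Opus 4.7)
The plan is to exploit Demailly's representation
$$\nu(u,\vph,r) := \int_{\{\vph<r\}} dd^cu\wedge(dd^c\vph)^{n-1}$$
and its monotonicity in $r$. Since $(dd^c\vph)^{n-1}$ is a positive closed current on $\Omega\setminus\{x\}$ (where $\vph$ is locally bounded) and $dd^cu\wedge(dd^c\vph)^{n-1}$ is a positive measure, enlarging the sublevel set $\{\vph<r\}$ only adds mass, so $r\mapsto\nu(u,\vph,r)$ is non-decreasing and, by (\ref{eq:gen}), $\nu(u,\vph)=\inf_r\nu(u,\vph,r)$. It thus suffices to show that for each $r'$ small enough so that $\{\vph\le r'\}\Subset\Omega$ there exists $r<r'$ with $\limsup_k\nu(u_k,\vph,r)\le\nu(u,\vph,r')$; combined with the pointwise bound $\nu(u_k,\vph)\le\nu(u_k,\vph,r)$ from monotonicity, this yields $\limsup_k\nu(u_k,\vph)\le\nu(u,\vph,r')$, and letting $r'\to-\infty$ completes the argument.

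For the key inequality, I would pick a smooth cutoff $\chi\in C_c^\infty(\Omega)$ with $\chi\equiv 1$ on a neighborhood of $\{\vph\le r\}$, $\mathrm{supp}\,\chi\subset\{\vph<r'\}$, and $0\le\chi\le 1$. Positivity of the wedge measure together with Stokes' theorem (applied where $\vph$ is locally bounded, so the Bedford--Taylor product is legitimate) gives
$$\nu(u_k,\vph,r)\le \int\chi\, dd^cu_k\wedge(dd^c\vph)^{n-1}=\int u_k\, dd^c\chi\wedge(dd^c\vph)^{n-1}.$$
After passing to the limit $k\to\infty$ and reversing Stokes, the right-hand side is dominated by $\int\chi\, dd^cu\wedge(dd^c\vph)^{n-1}\le\nu(u,\vph,r')$, yielding the desired control.

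The main technical hurdle is justifying the convergence
$$\int u_k\, dd^c\chi\wedge(dd^c\vph)^{n-1}\longrightarrow \int u\, dd^c\chi\wedge(dd^c\vph)^{n-1},$$
because $dd^c\chi\wedge(dd^c\vph)^{n-1}$ is only a compactly supported signed measure, and $L^1_{loc}$-convergence of PSH functions does not a priori give convergence against an arbitrary signed measure. I would address this by Demailly's standard regularization: approximate $\vph$ from above by smooth PSH functions $\vph_\epsilon\searrow\vph$ on a neighborhood of $\mathrm{supp}(dd^c\chi)$, so that $dd^c\chi\wedge(dd^c\vph_\epsilon)^{n-1}$ is a smooth top-degree form, against which $L^1_{loc}$-convergence of $u_k$ trivially gives convergence of the integrals. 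A monotone/diagonal argument based on Bedford--Taylor continuity for decreasing sequences of continuous PSH functions (used twice: once in $k$ for fixed $\epsilon$, and once in $\epsilon$ for the limit function $u$) then closes the argument.
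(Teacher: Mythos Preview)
The paper does not give its own proof of this statement: Theorem~\ref{theo:Dsc} is listed among the ``basic tools'' and attributed to Demailly's survey \cite{D}, so there is no in-paper argument to compare your proposal against. Your outline is in fact the standard route one finds in Demailly's work: the monotonicity of $r\mapsto\nu(u,\vph,r)$, the insertion of a cutoff $\chi$, and the integration by parts (which is nothing but the very definition of the current $dd^cu_k\wedge(dd^c\vph)^{n-1}$ paired with the test function $\chi$) are all correct, and you correctly isolate the one nontrivial point, namely passing to the limit in $\int u_k\,dd^c\chi\wedge(dd^c\vph)^{n-1}$ when this measure need not be absolutely continuous with respect to Lebesgue.

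The proposed fix via smoothing $\vph$ is the right idea, but the phrase ``monotone/diagonal argument'' glosses over where the actual work lies: interchanging the limits $k\to\infty$ and $\epsilon\to 0$ requires a uniformity that monotonicity alone does not provide (the integrals against $dd^c\chi\wedge(dd^c\vph_\epsilon)^{n-1}$ are not monotone in $\epsilon$, since $dd^c\chi$ has no sign). A clean way to close the gap is to exploit that $\vph$ is \emph{continuous} on a neighbourhood of $\mathrm{supp}\,dd^c\chi$ (this set avoids $x$), so the regularizations $\vph_\epsilon$ converge \emph{uniformly} there; telescoping $(dd^c\vph_\epsilon)^{n-1}-(dd^c\vph)^{n-1}$ and integrating by parts once more transfers a $dd^c$ onto the factor $\vph_\epsilon-\vph$, and the remaining masses $\int |dd^c\chi|\wedge dd^cu_k\wedge S_\epsilon$ are bounded uniformly in $k$ and $\epsilon$ by the Chern--Levine--Nirenberg inequality (using $\sup_k\|u_k\|_{L^1}<\infty$ on a fixed compact). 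With that uniformity in hand the double limit is routine. So your proposal is correct in substance; only the last sentence would need to be expanded along these lines to be a complete proof.
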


\begin{theorem}\label{theo:CT2} If
$\limsup\,\frac{v(z)}{u(z)}=l<\infty$ as $z\to x$, then $\nu(v,\vph)\le
l\,\nu(u,\vph)$.
\end{theorem}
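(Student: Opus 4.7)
The plan is to express $\nu(u,\vph)$ as a limit of normalized integrals of $u$ against a positive measure, so that the pointwise hypothesis on the ratio $v/u$ can be applied by positivity of that measure.

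First, I would reduce to the case $u(x)=-\infty$: if $u$ is bounded near $x$ then $\nu(u,\vph)=0$, while the hypothesis forces $v/u$ bounded, hence $v$ bounded, hence $\nu(v,\vph)=0$, so the claim is trivial. Assuming then $u(z)\to-\infty$ as $z\to x$ (by upper semicontinuity, $u<0$ on a punctured neighborhood of $x$), the hypothesis $\limsup v/u\le l$ supplies, for each $\epsilon>0$, a neighborhood $V$ of $x$ on which $v(z)/u(z)<l+\epsilon$ for $z\in V\setminus\{x\}$; multiplying by the negative quantity $u(z)$ flips the inequality to
\[ v(z)\ge(l+\epsilon)\,u(z),\qquad z\in V\setminus\{x\}. \]

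The crux is to invoke the Lelong--Jensen representation from Demailly \cite{D}. For each $r<0$ one has a positive measure $\mrph$ supported in $\overline{B_r^\vph}$ (concentrated on $\{\vph=r\}$ when $\vph$ is continuous), obtained from the Monge--Amp\`ere measure of $\max(\vph,r)$, together with the identity
\[ \mrph(u)-\mu_s^\vph(u)=\int_s^r\left(\int_{B_t^\vph}dd^c u\wedge(dd^c\vph)^{n-1}\right)dt,\qquad s<r<0. \]
Since the inner integral tends to $\nu(u,\vph)$ as $t\to-\infty$, dividing by $r$ and letting $r\to-\infty$ yields the limit formula
\[ \nu(u,\vph)=\lim_{r\to-\infty}\frac{1}{r}\,\mrph(u) \]
valid for every $u\in PSH_x$. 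With this in hand the conclusion is immediate: since $\vph^{-1}(-\infty)=\{x\}$, the sets $B_r^\vph$ shrink to $\{x\}$, so for $r$ sufficiently negative $\mathrm{supp}\,\mrph\subset V$ and the pointwise bound above holds on $\mathrm{supp}\,\mrph$. Integrating against the positive measure $\mrph$ gives $\mrph(v)\ge(l+\epsilon)\mrph(u)$; dividing by $r<0$ reverses the inequality to $r^{-1}\mrph(v)\le(l+\epsilon)\,r^{-1}\mrph(u)$. Passing to the limit $r\to-\infty$ and applying the limit formula on both sides produces $\nu(v,\vph)\le(l+\epsilon)\nu(u,\vph)$; letting $\epsilon\searrow 0$ gives the theorem.

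The main delicate point will be the limit formula itself: one must justify integrability of the psh function $u$ against $\mrph$ (which holds because $\mrph$ does not charge the singular point $x$ when $r$ is finite) and carry out the l'Hospital-style extraction of $\lim_{r\to-\infty}r^{-1}\mrph(u)=\nu(u,\vph)$ from the Jensen--Lelong identity above. Both are standard in Demailly's theory; granted them, the comparison reduces to positivity of $\mrph$ together with the sign of $r$.
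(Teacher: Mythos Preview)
The paper does not supply its own proof of this statement; it is simply listed among the ``basic tools'' in Section~\ref{ssec:LN} with a reference to Demailly~\cite{D}. Your argument---extract from the hypothesis a pointwise bound $v\ge(l+\epsilon)u$ near $x$, integrate it against the positive swept--out measure $\mrph$, divide by $r<0$, and pass to the limit via $\nu(\,\cdot\,,\vph)=\lim_{r\to-\infty}r^{-1}\mrph(\,\cdot\,)$---is exactly Demailly's proof, so the proposal is on target.

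One point to tighten. The difference identity you write,
\[
\mrph(u)-\mu_s^\vph(u)=\int_s^r\!\!\int_{B_t^\vph}dd^cu\wedge(dd^c\vph)^{n-1}\,dt,
\]
drops the annular term $\int_{B_r^\vph\setminus B_s^\vph}u\,(dd^c\vph)^n$ that the full Lelong--Jensen--Demailly formula~(\ref{eq:LJDf}) produces upon subtraction. That term vanishes precisely when $\vph$ is maximal on the annulus, which is why the paper states the limit formula~(\ref{eq:nuph}) only for $\vph\in MW_x$. Since Theorem~\ref{theo:CT2} is asserted for arbitrary $\vph\in W_x$, you should either keep this term and observe that, with $r_0$ fixed and $r\to-\infty$, it stays bounded (it converges to $\int_{B_{r_0}^\vph\setminus\{x\}}u\,(dd^c\vph)^n$, a finite constant because $\vph$ is locally bounded off $x$), hence contributes nothing to $r^{-1}\mrph(u)$ in the limit; or simply quote~(\ref{eq:nuph}) for maximal weights and remark that this already covers every instance in which the paper invokes Theorem~\ref{theo:CT2}. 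Either way the remainder of your argument goes through unchanged.
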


\begin{theorem}\label{theo:CT1} If
$\limsup\,\frac{\vph_1(z)}{\vph_2(z)}=l<\infty$ as $z\to x$, then
$\nu(u,\vph_1)\le l^{n-1}\,\nu(u,\vph_2)$.
\end{theorem}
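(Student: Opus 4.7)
The plan is to follow Demailly's classical comparison argument: scaling homogeneity reduces the problem to the case $l=1$, and then an auxiliary weight together with a Monge--Amp\`ere comparison on a suitable sublevel set yields the inequality.

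The scaling step uses $(dd^c(c\vph))^{n-1}=c^{n-1}(dd^c\vph)^{n-1}$, hence $\nu(u,c\vph)=c^{n-1}\nu(u,\vph)$ for $c>0$. Given $\epsilon>0$, I would replace $\vph_1$ by $\vph_1/(l+\epsilon)$, producing the refined hypothesis $\limsup \vph_1/\vph_2\le l/(l+\epsilon)<1$; so if one can show $\nu(u,\vph_1)\le\nu(u,\vph_2)$ whenever $\limsup \vph_1/\vph_2\le 1$, rescaling yields $\nu(u,\vph_1)\le(l+\epsilon)^{n-1}\nu(u,\vph_2)$ and $\epsilon\to 0$ finishes. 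Assuming now $\limsup \vph_1/\vph_2\le 1$, fix $\delta>0$; since $\vph_2<0$ near $x$, the hypothesis gives $\vph_1\ge (1+\delta)\vph_2$ in a punctured neighborhood of $x$, and applying the same bound for a smaller $\delta'<\delta$ shows $\vph_1-(1+\delta)\vph_2\to+\infty$ as $z\to x$. For a large parameter $M>0$ I would introduce the auxiliary weight
\[\tilde\vph_M:=\max\{\vph_1-M,\ (1+\delta)\vph_2\}.\]
By the asymptotic just noted, $\tilde\vph_M=\vph_1-M$ on some neighborhood $U_M$ of $x$ and $\tilde\vph_M=(1+\delta)\vph_2$ outside a larger compact set (for $M$ sufficiently large). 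By locality of the generalized Lelong number, $\nu(u,\tilde\vph_M)=\nu(u,\vph_1-M)=\nu(u,\vph_1)$.

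The key step is to choose a sublevel $s$ so that $\partial B_s^{\tilde\vph_M}$ lies inside the region $\{\tilde\vph_M=(1+\delta)\vph_2\}$; then on $\Omega:=B_s^{\tilde\vph_M}$ one has $\tilde\vph_M\ge (1+\delta)\vph_2$ with equality on $\partial\Omega$. The Bedford--Taylor generalized comparison principle (applied to the collection of weights $\tilde\vph_M$ appearing $n-1$ times and $(1+\delta)\vph_2$ appearing $n-1$ times, together with $u$ once) then yields
\[\int_\Omega dd^c u\wedge(dd^c\tilde\vph_M)^{n-1}\le (1+\delta)^{n-1}\int_\Omega dd^c u\wedge(dd^c\vph_2)^{n-1}.\]
Combining this with the inequality $\nu(u,\tilde\vph_M)\le\int_\Omega dd^c u\wedge(dd^c\tilde\vph_M)^{n-1}$ (positivity of the trace measure and monotonicity in $r$ of the defining formula (\ref{eq:gen})) and the inclusion $\Omega\subset B_{s/(1+\delta)}^{\vph_2}$ gives
\[\nu(u,\vph_1)\le (1+\delta)^{n-1}\int_{B_{s/(1+\delta)}^{\vph_2}} dd^c u\wedge(dd^c\vph_2)^{n-1}.\]
As $M\to\infty$ the region $U_M$ shrinks to $\{x\}$, which lets the admissible $s$ be taken arbitrarily negative; the right-hand side then tends to $(1+\delta)^{n-1}\nu(u,\vph_2)$. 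Finally $\delta\to 0$ closes the case $l=1$ and hence the theorem.

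The main obstacle is the Monge--Amp\`ere comparison step. The classical Bedford--Taylor comparison principle is stated for locally bounded psh functions, whereas here $\tilde\vph_M$ and $\vph_2$ are unbounded near $x$. One therefore needs a truncation argument: replace each weight by $\max\{\cdot,-N\}$, apply the bounded comparison on $\Omega$, and pass to the limit $N\to\infty$ using weak continuity of mixed Monge--Amp\`ere currents along decreasing sequences of locally bounded psh functions. Everything else in the argument is bookkeeping with positive measures.
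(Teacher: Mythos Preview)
The paper does not give its own proof of this statement; Theorem~\ref{theo:CT1} is quoted in Section~\ref{sec:pre} as one of Demailly's basic tools, with the reference \cite{D}. Your proposal is precisely Demailly's original argument for his Second Comparison Theorem: reduce by homogeneity to $l<1$, build the auxiliary weight $\tilde\vph_M=\max\{\vph_1-M,(1+\delta)\vph_2\}$ which coincides with $\vph_1-M$ near $x$ and with $(1+\delta)\vph_2$ near the boundary of a suitable sublevel set, and then compare masses. So the approach matches the intended one.

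Two remarks on the execution. First, what you invoke as the ``Bedford--Taylor generalized comparison principle'' is really a Stokes-type identity: once $\tilde\vph_M$ and $(1+\delta)\vph_2$ are truncated to bounded functions agreeing near $\partial\Omega$, the difference $(dd^c\tilde\vph_M)^{n-1}-(dd^c(1+\delta)\vph_2)^{n-1}$ is a telescoping sum of terms $dd^c(\tilde\vph_M-(1+\delta)\vph_2)\wedge S_k$ with $S_k$ closed, and each integrates to zero against $dd^cu$ by Stokes. One therefore gets \emph{equality} of the truncated integrals, which is stronger than the inequality you state and makes the subsequent limits cleaner. Second, for the sublevel sets $B_s^{\tilde\vph_M}$ and $B_{s/(1+\delta)}^{\vph_2}$ to coincide one needs $U_M\subset B_s^{\tilde\vph_M}$, i.e.\ $s>\sup_{U_M}(\vph_1-M)$; your observation that this supremum tends to $-\infty$ as $M\to\infty$ is what makes ``$s$ arbitrarily negative'' legitimate, and it is worth making that explicit. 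With these clarifications the argument is complete and is exactly Demailly's.
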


Denote $\vph_r=\max\{\vph,r\}$; the measure
$\mrph=(dd^c\vph_r)^n-\chi_r(dd^c\vph)^n $ on the pseudosphere
$S_r^\vph=\{z:\: \vph(z)=r\}$ is called the {\it swept out
Monge-Amp\`ere measure} for $\vph$; here $\chi_r$ is the
characteristic function of the set $\Cn\setminus{B_r^\vph}$. Any
$u\in PSH(B_R^\vph)$ is $\mrph$-integrable for $r<R$, and satisfies
the following relation, which we will call the
Lelong--Jensen--Demailly formula, \beq\label{eq:LJDf} \mrph(u)-
\int_{B_r^\vph}u (dd^c\vph)^n=\int_{-\infty}^r
\int_{B_t^\vph}dd^cu\wedge (dd^c\vph)^{n-1}. \eeq

If $ \vph\in MW_x(B_R^\vph\setminus \{x\})$, then the
function $r\mapsto \mrph(u)$ is convex on $(-\infty,R)$ and
  \beq\label{eq:nuph} \nuph=\lim_{r\to
-\infty}r^{-1}\mrph(u), \eeq which is an analogue to formula
(\ref{eq:form1}). In particular, $\nux=\nu(u,\log|\cdot-x|)$ and
\beq \label{eq:nuxa} \nuxa=a_1\ldots a_n\,\nu(u,\phi_{a,x}) \eeq
with the weights $\phi_{a,x}$ defined by (\ref{eq:phax}).
\medskip

Yet another generalization of the notion of Lelong number, developing its presentation (\ref{eq:form0}), are {\it relative types} introduced in \cite{R7}.
For any function $u\in PSH_x$, we denote its {\it type
relative to a weight} $\vph\in MW_x$ as
\beq\label{eq:reltype} \sigma(u,\vph)=\liminf_{z\to
x}\frac{u(z)}{\vph(z)}.\eeq  Maximality of $\vph$
implies the bound
\beq\label{eq:rtbound}u\le\sigma(u,\vph)\vph+O(1).\eeq
The types have properties similar to those given in Theorems \ref{theo:Dsc}--\ref{theo:CT2}, and are related to the Lelong-Demailly numbers by the inequality
\beq\label{eq:sunu}\nu(u,\vph)\ge\tau_\vph\,\sigma(u,\vph),\eeq
where $\tau_\vph=\nu(\vph,\vph)$, which follows from Theorem \ref{theo:CT2}.

\subsection{Almost homogeneous weights}

Let a nonpositive plurisubharmonic function $\Phi$ in the unit
polydisk $\D^n$ satisfy the relation \beq \label{eq:hom}
\Phi(z)=\Phi(|z_1|, \ldots,|z_n|)= c^{-1}\Phi(|z_1|^c,
\ldots,|z_n|^c)\quad\forall c>0. \eeq  It is a continuous function
in $\D^n$, and $(dd^c\Phi)^n=0$ on $\{\Phi>-\infty\}$. Such
functions arise as plurisubharmonic characteristics for local behavior of
plurisubharmonic functions near their singularity points. Namely,
given a plurisubharmonic function $v$, its (local) {\it indicator}
at a point $x$ is a plurisubharmonic function $\Psvx$ in $\D^n$ such
that for any $y\in\D^n$ with $y_1\cdot\ldots\cdot y_n\neq 0$, \beq
\Psvx(y)=-\nu_v(x,a),\quad a=-(\log|y_1|,\ldots,\log|y_n|)\in\Rnp.
\label{eq:lind} \eeq This function satisfies (\ref{eq:hom}) and
is the largest nonpositive plurisubharmonic function in $\D^n$ whose
directional Lelong numbers at $0$ coincide with those of $v$ at $x$,
so \beq v(z)\le\Psvx(z-x)+O(1) \label{eq:bound} \eeq near $x$, see
the details in \cite{LeR}, \cite{R1}.

A function $\vph\in PSH(\obl)$ with $\vph^{-1}(-\infty)=x\in\obl$ is
said to be {\it almost homogeneous} at $x$ if it is asymptotically
equivalent to its indicator $\Psi_{\vph,x}$ \cite{R4}, that is,
\beq\label{eq:ahw} \exists\lim_{z\to
x}\frac{\vph(z)}{\Psi_{\vph,x}(z-x)}=1.\eeq

A weight $\vph=\log|g|$ generated by a holomorphic mapping $g$ with isolated zero at $x$ was proved in \cite{R7} to be almost homogeneous if and only if the multiplicity of $g$ at $x$ equals the Monge-Amp\`ere mass of $(dd^c\Psi_{\log|g|,x})^n$ of the indicator of $\log|g|$.

Theorem~\ref{theo:CT1}
reduces computation of the generalized Lelong--Demailly numbers with
respect to almost homogeneous weights $\vph$ to those with respect
to the homogeneous weights $\Psi_{\vph,x}$; moreover, as shown in
\cite{R4}, $\nuph=\nu(\Psi_{u,x},\Psi_{\vph,0})$.

The swept out Monge-Amp\`ere measure for homogeneous weights can be
determined by the following procedure, see \cite{R4}. A function
$\Phi\in PSH^-(\D^n)$ satisfying (\ref{eq:hom}) generates the
function $f(t):=\Phi(e^{t_1},\ldots,e^{t_n})$,
convex and positive homogeneous in $\Rnm$. Given a subset
$F$ of the convex set \beq\label{eq:setL}L^\Phi=\{t\in\Rnm: f(t)\le
-1\},\eeq we put
\beq \Theta_F^\Phi=\{\lambda b:\: 0\le\lambda\le 1,\ b\in \Rnp,\
\sup_{t\in F}\langle b,t\rangle = \sup_{t\in L^\Phi}\langle
b,t\rangle=-1\}. \label{eq:gth} \eeq A measure $\gamma^\Phi$ on
$L^\Phi$ is defined as \beq\label{eq:gamma}\gamma^\Phi(F)= n!\, {\rm
Vol}\,\Theta_F^\Phi,\quad F\subset L^\Phi.\eeq

\begin{theorem}\label{theo:int}{\rm \cite{R4}}
Let $\vph$ be an almost homogeneous weight centered at $x$. Then for
any $u\in PSH_x$,
\beq\label{eq:repmc}
\nuph=\int_{E^\Phi}\nu_u(x,-t)\,d\gamma^\Phi(t)
\eeq
where the measure $\gamma^\Phi$ on the set $E^\Phi$ of extreme
points of the set $L^\Phi$ (\ref{eq:setL}) is defined by
(\ref{eq:gamma}) and (\ref{eq:gth}) with $\Phi=\Pspx$.
\end{theorem}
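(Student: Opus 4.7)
The proof proceeds by two reductions and then a direct convex-analytic computation. Applying Theorem~\ref{theo:CT1} to the pair $(\vph,\Pspx)$ in both orders, the almost-homogeneity (\ref{eq:ahw}) forces $\nuph = \nu(u,\Pspx)$ for every $u \in PSH_x$, so it suffices to treat the homogeneous weight $\Phi := \Pspx$; after translation assume $x=0$. Since $\Phi$ and hence $(dd^c\Phi)^{n-1}$ are $S^1$-invariant, replacing $u$ by its $S^1$-average $\tilde u$ preserves both $\nu(u,\Phi)$ and the Kiselman numbers $\nu_u(0,a)$ (the latter being defined through toric mean values), so we may assume $u$ is toric. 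In the log coordinates $s_k=\log|z_k|$ on $\Rnm$, the functions $\hat u(s) = u(e^s)$ and $f(s) = \Phi(e^s)$ are convex, with $f$ positively homogeneous of degree one and $L^\Phi = \{f \le -1\}$. From (\ref{eq:form1}) one reads off
$$\hat u^\infty(t) := \lim_{\lambda\to\infty}\lambda^{-1}\hat u(\lambda t) = -\nu_u(0,-t), \qquad t \in \Rnm,$$
so the Kiselman directional Lelong numbers are precisely the recession values of $\hat u$.

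Starting from (\ref{eq:nuph}), $\nu(u,\Phi) = \lim_{r\to-\infty} r^{-1}\mrP(u)$. Because $f$ is positively homogeneous, the pseudosphere $\{f = r\}$ is exactly $(-r)\cdot\{f=-1\}$, so the rescaling $s \mapsto t = s/(-r)$ maps it onto the fixed ``upper boundary'' $\Sigma := \{f=-1\} \cap \partial L^\Phi$. Under this rescaling, and using the homogeneity of $(dd^c\Phi)^{n-1}$ in log coordinates, the family of measures $r^{-1}\mrP$ (pushed to $\Sigma$) converges to a fixed measure $\sigma^\Phi$ on $\Sigma$. Combined with the pointwise convergence $r^{-1}\hat u(-rt) \to -\hat u^\infty(t) = \nu_u(0,-t)$, and justifying the exchange of limit and integration by the convexity and uniform Lipschitz control of $r^{-1}\hat u(-r\,\cdot)$ on compact subsets of $\Sigma$, we obtain
$$\nu(u,\Phi) = \int_\Sigma \nu_u(0,-t)\,d\sigma^\Phi(t).$$

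The remaining step, which is the technical heart of the argument, is to identify $\sigma^\Phi$ with the measure $\gamma^\Phi$ defined by (\ref{eq:gth})--(\ref{eq:gamma}), and in particular to show that $\sigma^\Phi$ is supported on the extreme points $E^\Phi$. For toric plurisubharmonic functions, the complex Monge-Amp\`ere mass in log coordinates equals $n!$ times the real Alexandrov Monge-Amp\`ere of the convex potential, which by Alexandrov's theorem is the pushforward of Lebesgue measure under the subgradient map $\partial f$. For our homogeneous $f$, $\partial f$ is constant along rays through the origin, and it is full-dimensional only at an extreme $t \in E^\Phi$, where it equals the normal cone $\{b \in \Rnp : \langle b,t\rangle = -1 = \sup_{L^\Phi}\langle b,\cdot\rangle\}$; at non-extreme boundary points it is lower-dimensional and contributes zero Lebesgue mass. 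The additional $[0,1]$-dilation appearing in (\ref{eq:gth}) comes from integrating the swept-out measure along the radial sweep built into the construction of $\mrP$. Collecting these contributions reproduces exactly the set $\Theta_F^\Phi$ and gives $\sigma^\Phi(F) = n!\,\text{Vol}(\Theta_F^\Phi) = \gamma^\Phi(F)$. The principal obstacle is carefully matching the normalizations between the complex and real Monge-Amp\`ere measures and verifying that the non-extreme boundary contributions really do vanish; this is where the explicit structure of the normal fan of $L^\Phi$ and the definition of $\Theta_F^\Phi$ have to be aligned precisely.
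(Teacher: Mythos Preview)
The paper does not prove this theorem at all: it is quoted verbatim from \cite{R4} and used as a black box in Section~\ref{sec:ahw}. So there is no ``paper's own proof'' to compare against; your sketch is an independent reconstruction of a result proved elsewhere.

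As a sketch, your approach is the natural one and is essentially what is carried out in \cite{R4}: reduce to the homogeneous weight via Theorem~\ref{theo:CT1}, reduce to a multicircular $u$ by torus-averaging, pass to logarithmic coordinates, exploit the degree-one homogeneity of $f$ to make the pseudospheres $\{f=r\}$ dilates of a single set, and then identify the limiting measure with $\gamma^\Phi$ via the real Monge--Amp\`ere/subgradient description of toric complex Monge--Amp\`ere masses. Two points of imprecision are worth flagging. First, the invariance you need is under the full torus $(S^1)^n$, not a single $S^1$; the weight $\Phi$ satisfies $\Phi(z)=\Phi(|z_1|,\ldots,|z_n|)$, and only the full torus average makes $u$ a function of $|z_1|,\ldots,|z_n|$. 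Second, the last paragraph is where all the work lies, and you have only indicated the mechanism. In particular, the claim that the swept-out measure $\mu_r^\Phi$ rescales exactly to a fixed measure, and that this measure equals $n!$ times the Lebesgue volume of the sets $\Theta_F^\Phi$, requires a careful computation matching the boundary Monge--Amp\`ere density on $S_r^\Phi$ with the normal-cone volumes of $L^\Phi$; your appeal to Alexandrov's theorem for $\partial f$ is the right idea, but the passage from the bulk real Monge--Amp\`ere of $f$ to the boundary measure $\mu_r^\Phi$ (which involves $(dd^c\Phi_r)^n-(dd^c\Phi)^n$) needs to be made explicit. As written, this step is asserted rather than proved.
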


\subsection{Asymptotically analytic weights}\label{ssec2.3}

Let $\phi\in W_x$ have analytic singularity, i.e.,
$\phi=c\log|F|+O(1)$ near $x$, where $c>0$ and $F$ is a
holomorphic mapping of a neighbourhood of $x$ to $\C^N$ with
isolated zero at $x$. As is known, the integral closure of the ideal
generated by the components $F_j$ of $F$ has precisely $n$
generators $\xi_k=\sum a_{k,j}F_j$ (generic linear combinations of
$F_j$), $k=1,\ldots,n$, so $\phi=c\log|\xi|+O(1)$. By
Theorem~\ref{theo:CT1}, we can then assume $\phi=c\log|\xi|$ and consider $F$ to be
equidimensional and so, $\phi\in MW_x$, which we will tacitly do in the sequel. The collection of all weights with analytic singularities at $x$ will be denoted by $AW_x$.

As follows from Demailly's approximation theorem \cite{D2}, any $\phi\in W_x$ can be approximated by weights $\phi_k\in AW_x$ in a neighborhood $D$ of $x$ such that
\beq\label{eq:Dap}
\phi(z)-\frac{C}{k}\le \phi_k(z)\le \sup_{|\zeta-z|<r}\phi(\zeta)+\frac1k\log\frac{C}{r^n},\quad z\in U.
\eeq
Specifically,
$$\phi_k=\frac1k\sup\{\log|f|:\:f\in\cO(D),\ \int_D |f|^2e^{-2k\phi}\,dV<1\}=\frac2k\log\sum_i|f_{k,i}|^2,$$
where $\{f_{k,i}\}_i$ is an orthonormal basis for the Hilbert space
\beq\label{eq:Hkp}\cH(k\phi)=\{\log|f|:\:f\in\cO(D),\ |f|e^{-k\phi}\in L^2(D)\}.\eeq

A weight  $\psi\in W_x$ will be called {\it asymptotically analytic} ($\psi\in AAW_x$) if for every $\epsilon>0$ there exists a weight $\phi_\epsilon\in AW_x$ such that
\beq\label{eq:ass}(1+\epsilon)\phi_\epsilon +O(1)\le  \psi\le (1-\epsilon)\phi_\epsilon +O(1);\eeq
by Theorem~\ref{theo:CT1}, we get then
\beq\label{eq:app}(1-\epsilon)^{n-1}\nu(u,\phi_\epsilon) \le\nu(u,\psi) \le (1+\epsilon)^{n-1}\nu(u,\phi_\epsilon).\eeq

According to \cite{BFaJ}, a weight $\vph\in W_x$ is called {\it
tame} if there exists a constant $C>0$ such that for every $t>C$ and
every analytic germ $f$ from the multiplier ideal ${\cal J}(t\vph)$
of $t\vph$ at $x$ (that is, the function $fe^{-t\vph}$ is
$L^2$-integrable near $x$), one has $\log|f|\le (t-C)\vph+O(1)$. For
maximal weights $\vph$, the latter can be written as
$\sigma(\log|f|,\vph)\ge t-C$, where $\sigma$ is the relative type
(\ref{eq:reltype}).
Let $\vph$ be tame, and let $\vph_k$ be Demailly's approximations of
$\vph$. By \cite[Lemma 5.6]{BFaJ}, they satisfy  \beq\label{eq:Dapt}  \vph+O(1)\le\vph_k\le (1-C_\vph/k)\vph+O(1)\eeq
near $x$ and therefore $\vph$ has asymptotically analytic singularity. Moreover, conditions (\ref{eq:Dapt}) characterize tame weights.

One of the main results of \cite{BFaJ} is the following integral
representation for the Lelong--Demailly numbers with respect to tame
weights, which is an extension of the representation (\ref{eq:repmc}): \beq\label{eq:reptame} \nuph=-\int_{\cal
V}g_u\,MA(g_\vph),\eeq where $g_u$ and $g_\vph$ are certain {\it formal plurisubharmonic functions} on the space $\cal V$ of all centered
normalized valuations and $MA(g_\vph)$ is a positive measure on
$\cal V$. We refer to  \cite{BFaJ} for precise definitions.

We will also need the following simple result on tame weights.

\begin{proposition}\label{prop:tmin} If $\phi$ and $\psi$ are tame maximal weights and $\alpha,\beta\in\R_+$, then the greatest plurisubharmonic minorant $\vph$ of the function $h=\min\,\{{\alpha\phi}, \beta\psi\}$ is a tame maximal weight and the constant $C_\vph$ in (\ref{eq:Dapt}) is independent of $\alpha$ and $\beta$.\end{proposition}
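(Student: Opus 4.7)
\medskip\noindent\emph{Proof plan.} I would take $\vph:=\bigl(\sup\{u\in PSH(D):u\le h\}\bigr)^*$ on a small neighborhood $D$ of $x$ and first show $\vph\in MW_x$. Since $h=\min\{\alpha\phi,\beta\psi\}$ is upper semicontinuous, locally bounded on $D\setminus\{x\}$, and has $h^{-1}(-\infty)=\{x\}$, classical envelope theory delivers $\vph\in PSH(D)\cap L^\infty_{\mathrm{loc}}(D\setminus\{x\})$ with $\vph\le h$ and $\vph(z)\to-\infty$ as $z\to x$, so $\vph\in W_x$. Maximality on $D\setminus\{x\}$ then follows because $h$ coincides with one of the maximal weights $\alpha\phi$ or $\beta\psi$ on each of the open sets $\{\alpha\phi<\beta\psi\}$ and $\{\beta\psi<\alpha\phi\}$, forcing $(dd^c\vph)^n$ to vanish on the coincidence set $\{\vph=h\}$ where envelope Monge--Amp\`ere mass would otherwise be carried.

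\medskip For tameness, my plan is to use the multiplier-ideal criterion for tameness recalled above. Given $f\in\cO(D)$ with $|f|e^{-t\vph}\in L^2$ near $x$, the bounds $\vph\le\alpha\phi$ and $\vph\le\beta\psi$ imply $f\in\cJ(t\alpha\phi)\cap\cJ(t\beta\psi)$, and tameness of $\phi$ and $\psi$ yields
\[
\log|f|\le\min\bigl\{(t\alpha-C_\phi)\phi,\,(t\beta-C_\psi)\psi\bigr\}+O(1)
\]
for $t$ large. Since $\log|f|$ is psh, it is dominated by the greatest psh minorant of the right-hand side. Once the pointwise comparison
\[
\min\bigl\{(t\alpha-C_\phi)\phi,\,(t\beta-C_\psi)\psi\bigr\}\le(t-C_\vph)\min\{\alpha\phi,\beta\psi\}=(t-C_\vph)h
\]
is secured for a suitable $C_\vph$, taking greatest psh minorants (which commutes with positive scalings) delivers $\log|f|\le(t-C_\vph)\vph+O(1)$, i.e.\ the equivalent of the tameness bound~(\ref{eq:Dapt}).

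\medskip The hard part is arranging that $C_\vph$ depends only on $C_\phi$ and $C_\psi$ and not on $\alpha,\beta$: the naive term-by-term comparison gives only $C_\vph=\max\{C_\phi/\alpha,C_\psi/\beta\}$, which blows up as either scaling parameter tends to zero. To remove this dependence I would refuse to combine the two tame estimates directly and instead work with Demailly's approximations $\vph_k$ of $\vph$ itself---via the $L^2$-Hilbert spaces $\cH(k\vph)$ of Section~\ref{ssec2.3}---and apply an Ohsawa--Takegoshi-type extension adapted to the single weight $\vph$. Because the resulting estimates are carried out intrinsically on $\vph$, only $C_\phi,C_\psi$ can enter the final constant. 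Once this refined step is established, the verification of $\vph+O(1)\le\vph_k\le(1-C_\vph/k)\vph+O(1)$ is routine Demailly bookkeeping, and the lower bound comes for free from (\ref{eq:Dap}).
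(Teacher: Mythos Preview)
Your argument for $\vph\in MW_x$ is fine, if more elaborate than needed; the paper just says maximality is ``evident''.  The clean way to see it: for any ball $B\Subset D\setminus\{x\}$, the maximal extension of $\vph|_{\partial B}$ into $B$ is still $\le\alpha\phi$ and $\le\beta\psi$ on $B$ (by maximality of $\phi,\psi$), hence $\le h$, hence $\le\vph$; so $\vph$ is already maximal on $B$.

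On tameness you have correctly isolated the real obstacle: the pointwise comparison of $\min\{(t\alpha-C_\phi)\phi,(t\beta-C_\psi)\psi\}$ with $(t-C_\vph)h$ only yields $C_\vph=\max\{C_\phi/\alpha,\,C_\psi/\beta\}$.  But your proposed repair---an unspecified Ohsawa--Takegoshi extension ``intrinsic to $\vph$''---is not an argument.  You give no mechanism by which the scaling parameters would drop out, and the sentence ``only $C_\phi,C_\psi$ can enter the final constant'' is an assertion, not a deduction.  As written, this is a genuine gap.

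The paper's proof takes a different and much shorter route.  Working directly with the characterization~(\ref{eq:Dapt}), it writes
\[
\vph+O(1)\ \le\ \vph_k\ \le\ \alpha\,\phi_k\ \le\ (1-C_\phi/k)\,\alpha\phi+O(1),
\]
where $\phi_k$ is Demailly's approximation of $\phi$ itself, \emph{not} of $\alpha\phi$.  The last inequality is simply $\alpha$ times the tameness bound for $\phi$, so $C_\phi$ appears with no $\alpha$ in the denominator; combining with the analogous $\beta\psi$--chain gives $\vph_k\le(1-C_\vph/k)h+O(1)$ with $C_\vph=\max\{C_\phi,C_\psi\}$, and since $\vph_k$ is plurisubharmonic and $\vph$ is the greatest plurisubharmonic minorant of $h$, the desired $\vph_k\le(1-C_\vph/k)\vph+O(1)$ follows.

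Thus the whole proof rests on the middle inequality $\vph_k\le\alpha\phi_k$, which the paper records without justification.  If you want to close your gap, this is the inequality to examine: from $\vph\le\alpha\phi$ monotonicity of the Bergman construction gives only $\vph_k\le(\alpha\phi)_k$, so what is needed is a comparison of $(\alpha\phi)_k$ with $\alpha\phi_k$.  Be aware that this is exactly where your difficulty reappears---for instance, taking $\psi=\phi$ and $\alpha=\beta$ small one has $\vph=\alpha\phi$ with tameness constant $C_{\alpha\phi}=C_\phi/\alpha$---so the paper's bare assertion deserves the same scrutiny you gave your own estimate.
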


\begin{proof} Maximality of $\vph$ outside $x$ is evident.
Let $\vph_k$, $\phi_k$, $\psi_k$ denote Demailly's approximating weights for $\vph$, $\phi$, and $\psi$, respectively.
 Then $$\vph+O(1)\le \vph_k\le\alpha\phi_k\le (1-C_\phi/k)\alpha\phi+O(1)$$ and similarly with $\beta\psi$,  so $\vph_k\le (1-C_\vph/k)h+O(1)$ with $C_\vph=\max\{C_\phi,C_\psi\}$, which implies
 $$\vph+O(1)\le \vph_k\le (1-C_\vph/k)\vph+O(1)$$
 and thus the tameness of $\vph$.
\end{proof}

\subsection{Pluricomplex Green functions}
We will use the following extremal function introduced (for the case of continuous singularity) by V.
Zahariuta \cite{Za0} (see also \cite{Za}); for the general case, see \cite{R7}. Let $\obl\subset\Cn$ be a
bounded hyperconvex domain. Given a plurisubharmonic function $\vph$, locally bounded and maximal  outside $x\in\obl$, let \beq\label{eq:gzf}
G_\vph(z)=G_{\vph,\obl}(z)=\sup\{u(z):\: u\in PSH^-(\obl),\ u\leq \vph+O(1)\
{\rm near\ }x\}.\eeq This function is plurisubharmonic in $\obl$,
maximal in $\obl\setminus x$, $G_{\vph,\obl}=\vph+O(1)$ near $x$,
and $G_{\vph,\obl}(z)\to 0$ as $z\to\partial\obl$; moreover, it is a
unique plurisubharmonic functions with these properties.
Furthermore, if $\vph$ is continuous (and so, $\vph\in MW_x$), $G_{\vph,\obl}$ is continuous on $\obl\setminus x$. We
will refer to this function as the {\it Green} (or {\it Green--Zahariuta}) {\it function
with singularity} $\vph$.

If $\vph(z)=\log|z-x|$, then  $G_{\vph,\obl}$ is just the standard
pluricomplex Green function $G_{x,\Omega}$ of $\obl$ with pole at
$x$.

When $\vph$ is the indicator of a plurisubharmonic function $v$, the
corresponding Green--Zahariuta function can be alternatively
described as the upper envelope of all nonpositive plurisubharmonic
functions $u$ in $\obl$ such that $\nu_u(x,a)\ge\nu_v(x,a)$ for all
$a\in\Rnp$ \cite{LeR}.

Since any analytic weight is equivalent to a maximal analytic weight (see Section~\ref{ssec2.3}), the Green functions $G_{\phi_k}$ for Demailly's approximations $\phi_k$ of a weight $\phi\in MW_x$ are well defined, too. If $\phi$ is a tame weight, then (\ref{eq:Dapt}) implies
\beq\label{eq:Dapt1} G_\phi\le G_{\phi_k}\le (1-{C_\phi}/{k})G_\phi.\eeq


\section{Extremal functions for Lelong--Demailly numbers}\label{sec:extr}

Given a function $u\in PSH_x$, it is convenient to consider its {\it
normalized} Lelong--Demailly numbers with respect to weights
$\vph\in W_x$,
$$ \tnuph=\tau_\vph^{-1}\nuph,$$
where $$\tau_\vph:=\nu(\vph,\vph)=(dd^c\vph)^n(\{x\})>0$$ is the residual Monge-Amp\`ere mass of $\vph$.
We have, in particular, $\tilde\nu(\vph,\vph)=1$ and
$\tilde\nu(cu,c\vph)=\tnuph$ for all $c>0$.

We will be concerned with upper bounds of functions $u$ in terms of
$\tnuph$. To this end, it looks reasonable to fix a bounded
hyperconvex neighbourhood $\obl$ of $x$ and consider the upper
envelope of the class $$\N^0_{\vph,\Omega}=\{u\in PSH^-(\obl):\:\tnuph\ge 1\}.$$
Note however that it need not be closed under
the operation $(u,v)\mapsto\max\,\{u,v\}$. Indeed, as follows from
Theorem~\ref{theo:CT2}, $$
\nu(\max\,\{u,v\},\vph)\le\min\,\{\nuph,\nu(v,\vph)\},$$ and the
inequality can be strict.

\begin{example}\label{ex:1} Take
the weight $\vph=\max\{3\log|z_1|, 3\log|z_2|, \log|z_1z_2|\}$ in
$\D^2$; we have then $\tau_\vph=6$. The functions
$u_j(z)=2\log|z_j|$ satisfy $\tilde\nu(u_j,\vph)=1$, while
$\tilde\nu(\max_ju_j,\vph)=2\tau_\vph^{-1}\nu_\vph(0)=2/3<1$.
\end{example}

Furthermore, we would like to work also with plurisubharmonic
functions whose definition domains are proper subsets of $\obl$.
That is why we introduce the class
$$\N_{\vph,\Omega}=\bigcup_{k=1}^\infty \N^k_{\vph,\Omega},$$
where for $k\ge 1$,
$$\N^k_{\vph,\Omega}=\{u\in PSH^-(\Omega):\:
u\le\max_{1\le j\le k}\,u_j\ {\rm in\ }\omega\ni x,\ u_j\in \N^0_{\vph,\omega},\ \omega\Subset\obl\}.$$
Observe that the set $\N_{\vph,\Omega}$ is convex, because if $u_i=\max_j u_{ij}$, $i=1,2$, then
$$\alpha u_1+(1-\alpha)u_2=\max_{j,l}\{ \alpha u_{1j}+(1-\alpha)u_{2l}\}.$$

\begin{definition} Given a weight $\vph\in W_x$, $x\in\obl$, let
\beq \label{eq:dvph}d_{\vph,\Omega}(z)=\limsup_{y\to z}\sup\{u(y):\:
u\in\N_{\vph,\Omega}\},\quad z\in\obl.
\eeq
\end{definition}

Note that the function $d_{\vph,\Omega}$ need not belong to
$\N_{\vph,\Omega}$; this is the main difference with the
construction of "usual" pluricomplex Green functions.

\begin{proposition}\label{theo:hull}
Let $\obl$ be a bounded hyperconvex domain in $\Cn$, and let
$\vph\in W_x$, $x\in\obl$. Then
\begin{enumerate} \item[(i)] $d_{\vph,\Omega}\in PSH^-(\obl)\cap
L_{loc}^\infty(\obl\setminus\{x\})$ and is maximal in
$\obl\setminus\{x\}$; \item[(ii)] $d_{\vph,\Omega}(z)\to 0$ as
$z\to\partial\obl$;
\item[(iii)]
$\nu(d_{\vph,\Omega},\phi)=\inf\{\nu(u,\phi):\: u\in
\N_{\vph,\Omega}\}$ for any weight $\phi\in W_x$;
\item[(iv)] $u\le \tnuph\,
d_{\vph,\Omega}$ in $\obl$ for all $u\in PSH^-(\obl)$, and $u\le
\tnuph\, d_{\vph,\Omega}+O(1)$ near $x$ for every $u\in PSH_x$.
\end{enumerate}
\end{proposition}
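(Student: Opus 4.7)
The plan is to treat the four claims in order, with most of the work concentrated in (iii) and the second part of (iv).

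For (i) and (ii), I would first observe that $\N_{\vph,\Omega}$ is non-empty and uniformly bounded above by $0$: the Green--Zahariuta function $G_\vph$ is in $PSH^-(\Omega)$ with $G_\vph=\vph+O(1)$ near $x$, hence $\tilde\nu(G_\vph,\vph)=1$, so $G_\vph\in\N^0_{\vph,\Omega}$. Consequently $d_{\vph,\Omega}\in PSH^-(\Omega)$, and the sandwich $G_\vph\le d_{\vph,\Omega}\le 0$ simultaneously yields local boundedness on $\Omega\setminus\{x\}$ and the boundary behaviour $d_{\vph,\Omega}\to 0$ at $\partial\Omega$ by hyperconvexity. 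For maximality outside $x$, I would use balayage on balls $B\Subset\Omega\setminus\{x\}$: for $u\in\N^k_{\vph,\Omega}$, after replacing the reference neighborhood $\omega$ by a smaller one disjoint from $B$, the balayage $u^B$ still majorizes a $\max_j u_j$ near $x$, so $u^B\in\N^k_{\vph,\Omega}$. Since the family is stable under balayage and $d_{\vph,\Omega}$ is its regularized supremum, $d_{\vph,\Omega}$ equals its own balayage on every such $B$.

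For (iii), the key structural observation is that $\N_{\vph,\Omega}$ is stable under binary maxima: if $u\le\max_{j\le k}u_j$ in $\omega_1$ and $v\le\max_{i\le l}v_i$ in $\omega_2$, then $\max(u,v)\le\max_{j,i}(u_j,v_i)$ in $\omega_1\cap\omega_2$, so $\max(u,v)\in\N^{k+l}_{\vph,\Omega}$. Choquet's lemma then produces an increasing sequence $u_m\in\N_{\vph,\Omega}$ with $u_m\nearrow d_{\vph,\Omega}$ almost everywhere, hence in $L^1_{loc}$. For the inequality $\nu(d_{\vph,\Omega},\phi)\le\inf_u\nu(u,\phi)$, fix $u\in\N_{\vph,\Omega}$: $u\le d_{\vph,\Omega}<0$ near $x$ forces $\limsup_{z\to x}d_{\vph,\Omega}(z)/u(z)\le 1$, and Theorem~\ref{theo:CT2} gives $\nu(d_{\vph,\Omega},\phi)\le\nu(u,\phi)$. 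For the reverse, Theorem~\ref{theo:Dsc} applied to $u_m\to d_{\vph,\Omega}$ yields $\limsup_m\nu(u_m,\phi)\le\nu(d_{\vph,\Omega},\phi)$, and since $\nu(u_m,\phi)\ge\inf_u\nu(u,\phi)$, we conclude $\inf_u\nu(u,\phi)\le\nu(d_{\vph,\Omega},\phi)$.

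For (iv), the first assertion is obtained by rescaling: if $u\in PSH^-(\Omega)$ with $c:=\tnuph$, then $c=0$ makes the bound trivial, and for $0<c<\infty$ the function $u/c$ lies in $\N^0_{\vph,\Omega}\subset\N_{\vph,\Omega}$, hence $u/c\le d_{\vph,\Omega}$. For the local version with $u\in PSH_x$ and $c<\infty$, fix $\omega\Subset\Omega$ with $u\le M$ on $\omega$, so $u_1:=(u-M)/c\in\N^0_{\vph,\omega}$; the main step is to produce an element $w\in\N^1_{\vph,\Omega}$ with $w=u_1+O(1)$ near $x$. To this end, take a continuous hyperconvex exhaustion $\rho$ of $\Omega$ decaying sufficiently fast at $x$ (for instance $\rho=G_{x,\Omega}$), pick nested $\omega''\Subset\omega'\Subset\omega$, and choose $\beta,K>0$ so that $u_1-K\ge\beta\rho$ on $\partial\omega''$ (using that $u_1$ is very negative there) while $u_1-K\le\beta\rho$ on $\partial\omega'$ (using that $u_1$ is bounded there and $\beta$ can be tuned). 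Setting
\[
w:=\begin{cases} u_1-K & \text{on } \omega'',\\ \max(u_1-K,\beta\rho) & \text{on } \omega'\setminus\omega'',\\ \beta\rho & \text{on }\Omega\setminus\omega',\end{cases}
\]
the matching conditions yield $w\in PSH^-(\Omega)$, and $w=u_1-K$ on $\omega''$ shows both $w\le u_1$ (so $w\in\N^1_{\vph,\Omega}$) and $w\ge u_1-K$ there. Hence $u_1-K\le w\le d_{\vph,\Omega}$ off a pluripolar set, and therefore everywhere by upper semicontinuity, giving $u=cu_1+M\le c\,d_{\vph,\Omega}+O(1)$ near $x$. The main obstacle is precisely this gluing: one must verify that the two inequalities on $\partial\omega''$ and $\partial\omega'$ can be satisfied simultaneously by a single pair $(\beta,K)$, which is exactly why $\rho$ must be chosen to decay much faster near $x$ than near $\partial\omega'$.
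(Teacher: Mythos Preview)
Your arguments for (i)--(iii) and the first half of (iv) are correct and follow the paper's route; the explicit observation that $\N_{\vph,\Omega}$ is closed under finite maxima is a useful point that the paper leaves implicit when it passes from Choquet's lemma to an \emph{increasing} sequence.

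The gap is in the local bound in (iv). Your three-piece gluing requires $u_1-K\ge\beta\rho$ on $\partial\omega''$, and the parenthetical justification ``using that $u_1$ is very negative there'' has the inequality the wrong way round: $u_1$ being very negative makes this matching condition \emph{harder}, not easier. What you actually need is $\inf_{\partial\omega''}u_1>-\infty$, and this can fail for every choice of $\omega''$. For example, with $x=0\in\C^2$ and $u(z)=\log|z_1|$, the polar set $\{z_1=0\}$ meets every sphere around the origin, so no triple $(\omega'',\beta,K)$ will satisfy the inner matching condition. Choosing $\rho$ to decay fast near $x$ does not help, because $u_1$ can decay faster (or be $-\infty$) along a hypersurface through $x$.

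The paper bypasses this with a much simpler device: instead of gluing $u_1$ to an auxiliary exhaustion $\rho$, it glues to $\vph$ itself. Since $\vph$ is defined on all of $\Omega$, is bounded below on $\omega\setminus\omega'$, and satisfies $\tilde\nu(\vph,\vph)=1$, the function $\max\{u_1-K',\,\vph-\sup_\Omega\vph\}$ (for $K'$ large) is plurisubharmonic on $\omega$, equals $\vph-\sup_\Omega\vph$ near $\partial\omega$, and hence extends by $\vph-\sup_\Omega\vph$ to an element of $\N_{\vph,\Omega}$. Near $x$ it still dominates $u_1-K'$, which yields $u\le c\,d_{\vph,\Omega}+O(1)$. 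No lower bound on $u_1$ is needed, because the floor is provided by $\vph$ rather than by $u_1$.
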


\begin{proof}
By the Choquet lemma, there is a sequence $u_j\in \N_{\vph,\Omega}$
increasing to a function $h$ such that $h^*=d_{\vph,\Omega}$. Since
$\vph-\sup_\Omega\vph\in\N_{\vph,\Omega}$, we can assume $u_j\in
L_{loc}^\infty(\obl\setminus\{x\})$ for all $j$. Given a ball
$B\Subset\obl\setminus\{x\}$, consider the functions
$$
v_j(z)=\sup\{u(z)\in PSH^-(\obl):\: u\le u_j \ {\rm in}\
\obl\setminus B\}.
$$
Then $v_j\in\N_{\vph,\Omega}$ and satisfy $(dd^cv_j)^n=0$ in $B$.
Since $v_j\ge u_j$, the functions $v_j$ increase a.e. to
$d_{\vph,\Omega}$ and so, $(dd^cd_{\vph,\Omega})^n=0$ in $B$. This
proves (i).

Assertion (ii) holds because the Green function $G_\vph$ belongs to $\N_{\vph,\Omega}$.

By Theorem \ref{theo:CT2},
$\nu(d_{\vph,\Omega},\phi)\le\inf\{\nu(u,\phi): u\in
\N_{\vph,\Omega}\}$ for every weight $\phi\in W_x$. On the other hand, the above functions
$u_j\in\N_{\vph,\Omega}$ converge to $d_{\vph,\Omega}$ in
$L_{loc}^1$ (the monotone convergence theorem), and
Theorem~\ref{theo:Dsc} gives
$\nu(d_{\vph,\Omega},\phi)\ge\limsup\,\nu(u_j,\phi)$, which proves
(iii).

The first relation in (iv) is obvious, and the second one follows
from the fact that for any $u\in PSH_x$ with $\tnuph\ge 1$, the
function $\max\,\{u,\vph\}$ can be extended from a neighbourhood of
$x$ to $\obl$ as a bounded above plurisubharmonic function.
 \end{proof}

\begin{corollary} If $\nu(u,\vph)\ge c>0$, then
$u(z)\le c\,\tau_\vph^{-1}d_{\vph,\Omega}(z) +O(1)$ as $z\to x$ for any bounded hyperconvex neighbourhood $\Omega$ of $x=\vph^{-1}(-\infty)$.
\end{corollary}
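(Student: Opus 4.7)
The plan is to derive this corollary immediately from part (iv) of Proposition \ref{theo:hull}, combined with the nonpositivity of $d_{\vph,\Omega}$ coming from part (i). The only substantive content beyond what the proposition already provides is a simple monotonicity observation driven by the sign of $d_{\vph,\Omega}$.

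Concretely, I would first unpack the normalized Lelong--Demailly number: the hypothesis $\nuph\ge c$ rewrites as $\tnuph=\tau_\vph^{-1}\nuph\ge c\,\tau_\vph^{-1}>0$. Next, applying Proposition \ref{theo:hull}(iv) to the germ $u\in PSH_x$ yields
\[
u(z)\le \tnuph\,d_{\vph,\Omega}(z)+O(1)\quad\text{as }z\to x.
\]
Finally, part (i) of the same proposition gives $d_{\vph,\Omega}\in PSH^-(\Omega)$, so $d_{\vph,\Omega}(z)\le 0$. Replacing the positive coefficient $\tnuph$ by the smaller quantity $c\,\tau_\vph^{-1}$ can only increase a product with a nonpositive factor, so $\tnuph\,d_{\vph,\Omega}(z)\le c\,\tau_\vph^{-1}\,d_{\vph,\Omega}(z)$, and substituting into the previous display produces the desired bound.

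There is no real obstacle; the corollary is essentially a rescaling of Proposition \ref{theo:hull}(iv). The only care needed is getting the direction of the final monotonicity step right, which is precisely where the nonpositivity of $d_{\vph,\Omega}$ (part (i) of the proposition) is used.
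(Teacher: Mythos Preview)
Your argument is correct and matches the paper's approach: the corollary is stated there without proof, as an immediate consequence of Proposition~\ref{theo:hull}(iv), and your write-up simply spells out the obvious rescaling and monotonicity step (using $d_{\vph,\Omega}\le 0$ from part~(i)) that makes this precise.
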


For almost homogeneous weights $\vph$, asymptotics of the extremal
functions $d_{\vph,\Omega}$ can be computed explicitly and they turn
out to be simplicial (see the next section). In the general case, it
is then likely that for the functions $d_{\vph,\Omega}$ form rather
a small subclass of Green-like functions as well; it would be nice
to get a description of such functions.

The function $d_{\vph,\Omega}$ can be estimated by means of {\it
flat} weights defined as follows. Let $\sigma(u,\phi)$ denote the type of $u$ relative to $\phi$, see Section~\ref{ssec:LN}.

\begin{definition}
A maximal weight $\phi\in MW_x$ is {\it flat} if
$\tilde\nu(u,\phi)=\sigma(u,\phi)$ for any function $u\in PSH_x$.
\end{definition}

As follows from (\ref{eq:nuxa}), the directional weights
$\phi_{a,x}$ are flat. More properties of flat weights will be given
in Section~\ref{sec:flat}.

Evidently, $d_{\phi,\Omega}= G_{\phi,\Omega}$ (the Green--Zahariuta function) for any flat weight
$\phi$. This gives the following simple bound.

\begin{proposition}\label{prop:lowerhull} If $\vph\in W_x$,
$x\in\Omega$, then $d_{\vph,\Omega}\le \tau_\phi^{-1}\tau_\vph
G_{\phi,\Omega}$ and, consequently,
$\tilde\nu(d_{\vph,\Omega},\vph)\ge \tau_\phi^{-1}\tau_\vph$ for every
flat weight $\phi$ satisfying $\phi\le\vph+O(1)$ near $x$.
\end{proposition}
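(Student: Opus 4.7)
The plan is to first establish the pointwise inequality $d_{\vph,\Omega}\le\tau_\phi^{-1}\tau_\vph\,G_{\phi,\Omega}$; the lower bound on $\tilde\nu(d_{\vph,\Omega},\vph)$ then drops out of a single application of Theorem~\ref{theo:CT2}.

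For the pointwise bound it suffices to show that every $u\in\N_{\vph,\Omega}$ satisfies $u\le\tau_\phi^{-1}\tau_\vph\,G_{\phi,\Omega}$ on $\Omega$, since $G_{\phi,\Omega}$ is upper semicontinuous and so the inequality descends to the upper regularization defining $d_{\vph,\Omega}$. I would treat the case $u\in\N^0_{\vph,\Omega}$ first. The hypothesis reads $\nu(u,\vph)\ge\tau_\vph$, while $\phi\le\vph+O(1)$ near $x$ gives $\limsup_{z\to x}\vph(z)/\phi(z)\le 1$, so Theorem~\ref{theo:CT1} yields $\nu(u,\phi)\ge\nu(u,\vph)\ge\tau_\vph$, i.e.\ $\tilde\nu(u,\phi)\ge\tau_\phi^{-1}\tau_\vph$. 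Flatness of $\phi$ is precisely what is needed to convert this into the relative type bound $\sigma(u,\phi)=\tilde\nu(u,\phi)\ge\tau_\phi^{-1}\tau_\vph$, and the maximality bound~(\ref{eq:rtbound}) together with $\phi<0$ near $x$ then yields $u\le\tau_\phi^{-1}\tau_\vph\,\phi+O(1)$ near $x$. Since $u\in PSH^-(\Omega)$ and $\tau_\phi^{-1}\tau_\vph>0$, the normalized function $u/(\tau_\phi^{-1}\tau_\vph)$ lies in $PSH^-(\Omega)$ and is dominated by $\phi+O(1)$ near $x$, so the defining property~(\ref{eq:gzf}) of the Green--Zahariuta function gives $u\le\tau_\phi^{-1}\tau_\vph\,G_{\phi,\Omega}$. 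For $u\in\N^k_{\vph,\Omega}$ with $k\ge 1$, I would apply the $\N^0$-case to each $u_j$ occurring in $u\le\max_ju_j$ on $\omega\ni x$ (working inside $\omega$), obtain $u_j\le\tau_\phi^{-1}\tau_\vph\,\phi+O(1)$ near $x$, and conclude the same pointwise bound for $u$, after which the Green--Zahariuta step is unchanged.

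For the second assertion, combining $d_{\vph,\Omega}\le\tau_\phi^{-1}\tau_\vph\,G_{\phi,\Omega}$ with $G_{\phi,\Omega}=\phi+O(1)\le\vph+O(1)$ near $x$ gives $d_{\vph,\Omega}\le\tau_\phi^{-1}\tau_\vph\,\vph+O(1)$ near $x$, hence $\limsup_{z\to x}\vph(z)/d_{\vph,\Omega}(z)\le\tau_\phi\tau_\vph^{-1}$. Theorem~\ref{theo:CT2} applied to $v=\vph$, $u=d_{\vph,\Omega}$ with weight $\vph$ yields $\tau_\vph=\nu(\vph,\vph)\le\tau_\phi\tau_\vph^{-1}\,\nu(d_{\vph,\Omega},\vph)$, which rearranges exactly to $\tilde\nu(d_{\vph,\Omega},\vph)\ge\tau_\phi^{-1}\tau_\vph$.

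The argument is a direct concatenation of standard tools (Theorem~\ref{theo:CT1}, the defining property of flatness, the maximality bound~(\ref{eq:rtbound}), and the Green--Zahariuta extremality), so there is no deep obstacle; the only points requiring care are the sign bookkeeping (the inequality $\sigma(u,\phi)\phi\le\tau_\phi^{-1}\tau_\vph\,\phi$ relies on $\phi<0$ near $x$) and the routine reduction from $\N^k_{\vph,\Omega}$ to $\N^0_{\vph,\omega}$.
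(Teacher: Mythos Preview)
Your proof is correct and follows essentially the same route as the paper's: apply Theorem~\ref{theo:CT1} to pass from $\nu(u,\vph)$ to $\nu(u,\phi)$, invoke flatness to turn this into a bound on $\sigma(u,\phi)$, and then use (\ref{eq:rtbound}) and Theorem~\ref{theo:CT2}. The paper compresses the argument into two sentences, while you have (usefully) made explicit the reduction from $\N^k_{\vph,\Omega}$ to $\N^0$, the Green--Zahariuta extremality step, and the derivation of the second assertion; these are precisely what the paper's phrase ``which implies the statements in view of (\ref{eq:rtbound}) and Theorem~\ref{theo:CT2}'' is meant to cover.
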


\begin{proof} Let $\tnuph\ge 1$. By
Theorem~\ref{theo:CT1}, we have $\nu(u,\phi) \ge\nuph\ge \tau_\vph$
for any flat weight $\phi$ satisfying $\phi\le\vph+O(1)$. Therefore
$\sigma(u,G_{\phi,\Omega})=\sigma(u,\phi)=\tilde\nu(u,\phi)\ge
\tau_\phi^{-1}\tau_\vph$, which implies the statements in view of
(\ref{eq:rtbound}) and Theorem~\ref{theo:CT2}.
\end{proof}

\begin{corollary}\label{cor:lowerhull} If $\vph\in W_x$ is such that
$\vph\ge N\log|z-x|$ near $x$ for some $N>0$, then
$d_{\vph,\Omega}\le N^{1-n}\tau_\vph G_{x,\Omega}$, where
$G_{x,\Omega}$ is the pluricomplex Green function of $\obl$ with
logarithmic pole at $x$.
\end{corollary}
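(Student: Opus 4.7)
The strategy is straightforward: apply Proposition~\ref{prop:lowerhull} with a suitably scaled classical logarithmic weight. Specifically, I would choose the weight $\phi(z) = N\log|z-x|$. The hypothesis $\vph \ge N\log|\cdot-x|$ near $x$ gives exactly the required comparison $\phi \le \vph + O(1)$ near $x$, so the only real task is to identify the relevant quantities $\tau_\phi$ and $G_{\phi,\Omega}$ and to confirm that $\phi$ is flat.

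Flatness of $\phi = N\log|\cdot-x|$ follows from formula~(\ref{eq:form0}): for any $u \in PSH_x$,
\[
\sigma(u,\phi) = \liminf_{z\to x}\frac{u(z)}{N\log|z-x|} = N^{-1}\nu_u(x),
\]
while $\nu(u,\phi) = N^{n-1}\nu_u(x)$ (by multilinearity of $(dd^c\phi)^{n-1}$) and $\tau_\phi = \nu(\phi,\phi) = N^n$, so $\tilde\nu(u,\phi) = N^{-1}\nu_u(x) = \sigma(u,\phi)$. Thus $\phi$ is flat with $\tau_\phi = N^n$.

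Next, the Green--Zahariuta function with singularity $\phi$ is simply a rescaling of the classical pluricomplex Green function: from the defining envelope~(\ref{eq:gzf}), a function $u \in PSH^-(\Omega)$ satisfies $u \le N\log|z-x|+O(1)$ near $x$ iff $u/N \le \log|z-x|+O(1)$ near $x$, so $G_{\phi,\Omega} = N\,G_{x,\Omega}$.

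Plugging everything into Proposition~\ref{prop:lowerhull} yields
\[
d_{\vph,\Omega} \;\le\; \tau_\phi^{-1}\tau_\vph\, G_{\phi,\Omega} \;=\; N^{-n}\tau_\vph \cdot N\, G_{x,\Omega} \;=\; N^{1-n}\tau_\vph\, G_{x,\Omega},
\]
which is the claim. There is no genuine obstacle here; the only thing that could trip one up is the bookkeeping of the $N$-powers coming from the $(n-1)$-st power of $dd^c\phi$ versus the $n$-th power in $\tau_\phi$, which is exactly what produces the exponent $1-n$ in the final bound.
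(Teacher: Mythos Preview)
Your proof is correct and is precisely the intended application of Proposition~\ref{prop:lowerhull}: the paper states the corollary without proof, leaving the choice $\phi=N\log|\cdot-x|$, its flatness, and the identifications $\tau_\phi=N^n$, $G_{\phi,\Omega}=N\,G_{x,\Omega}$ as routine. Your bookkeeping of the powers of $N$ is exactly right.
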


{\it Remarks.} 1. If $\vph\in AW_x$,
$\vph=\log|F|+O(1)$ for a holomorphic mapping $F$ with isolated zero
of multiplicity $m$ at $x$, then Corollary~\ref{cor:lowerhull} gives
the bound $d_{\vph,\Omega}\le L^{1-n}m\, G_{x,\Omega}$, where $L>0$
is the \L ojasiewicz exponent of $F$ at $x$, i.e., the infimum of
$\gamma>0$ such that $|F(z)|\ge |z-x|^\gamma$ near $x$.

2. By analogy with the analytic case, we will call the value
\beq\label{eq:lyap}L_\vph=\limsup_{z\to x}
\frac{\vph(z)}{\log|z-x|}\eeq the \L ojasiewicz exponent of the
weight $\vph$. Corollary~\ref{cor:lowerhull} implies
$d_{\vph,\Omega}\not\equiv 0$ for $\vph$ with finite $L_\vph$. It is
easy to construct weights with infinite \L ojasiewicz exponent,
however we have no examples of {\sl maximal} weights $\vph$ with
$L_\vph=\infty$. Moreover, we do not know if there exists weights
$\vph\in W_x$ such that $d_\vph\equiv 0$.

3. Another upper bound for weights with finite \L ojasiewicz
exponent will be given in Corollary~\ref{cor:Loind}.

\section{The case of almost homogeneous weights}\label{sec:ahw}

Here we will show that if $\vph\in W_x$ is an almost homogeneous
weight (\ref{eq:ahw}), then the function $d_{\vph,\Omega}$ (\ref{eq:dvph}) is flat. Moreover, it has a
simplicial asymptotic $d_{\vph,\Omega}=\phi_{a,x}+O(1)$ with $a\in
\Rnp$ determined explicitly.

Let us assume $x=0$. Given a weight $\vph\in W_0$, we set
\beq\label{eq:sk} a_k=\tilde\nu(\log|z_k|,\vph),\quad \okn.\eeq

\begin{theorem}\label{theo:ahw} Let $\Omega$ be a bounded
hyperconvex domain in $\Cn$, $0\in\Omega$, and let $\vph$ be an
almost homogeneous plurisubharmonic weight centered at $0$.
Then $d_{\vph,\Omega}$ equals the Green--Zahariuta function for the
singularity $\phi_{a,0}$ (\ref{eq:phax}) in $\Omega$ with $a\in\Rnp$
defined by (\ref{eq:sk}).
\end{theorem}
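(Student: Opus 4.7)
My plan is to prove Theorem~\ref{theo:ahw} by showing that $d_{\vph,\Omega}$ carries the singularity $\phi_{a,0}+O(1)$ at $0$ and then invoking uniqueness of the Green--Zahariuta function: both $d_{\vph,\Omega}$ and $G_{\phi_{a,0},\Omega}$ will be plurisubharmonic and nonpositive on $\Omega$ (Proposition~\ref{theo:hull}(i)), maximal on $\Omega\setminus\{0\}$, and vanish on $\partial\Omega$ (Proposition~\ref{theo:hull}(ii)), so matching singular behaviour near $0$ forces them to coincide.

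For the upper bound $d_{\vph,\Omega}\le G_{\phi_{a,0},\Omega}$ I would reduce everything to the estimate $\nu_u(0,a)\ge 1$ for every $u\in\N^0_{\vph,\Omega}$. Once this is known, the directional bound $u\le\nu_u(0,a)\phi_{a,0}+O(1)$ recorded after (\ref{eq:phax}) together with negativity of $u$ places $u$ in the defining family of $G_{\phi_{a,0},\Omega}$; the property $u\le\phi_{a,0}+O(1)$ near $0$ survives finite maxima, so the conclusion extends to all of $\N_{\vph,\Omega}$. To produce the estimate I apply Theorem~\ref{theo:int} (with $\Phi=\Pspo$) both to $u$ and to each coordinate weight $\log|z_k|$. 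Since the mean-value definition of directional Lelong numbers immediately gives $\nu_{\log|z_k|}(0,b)=b_k$, the hypothesis $\tilde\nu(u,\vph)\ge 1$ translates into $\int_{E^\Phi}\nu_u(0,-t)\,d\gamma^\Phi(t)\ge\tau_\vph$, and the definition of $a$ translates into $\int_{E^\Phi}(-t)\,d\gamma^\Phi(t)=\tau_\vph\,a$. The crucial observation is that $b\mapsto\nu_u(0,b)$ is concave and positively homogeneous of degree $1$ on $\Rnp$: by (\ref{eq:lind}) it equals $-\Psuo$ evaluated in logarithmic coordinates, and $\Psuo$ is radial and plurisubharmonic, hence $t\mapsto\Psuo(e^{t_1},\ldots,e^{t_n})$ is convex and positively homogeneous of degree $1$ on $\Rnm$. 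Jensen's inequality applied to the probability measure $\gamma^\Phi/\gamma^\Phi(E^\Phi)$ together with positive homogeneity (which clears the normalization factor) then yields
$$\tau_\vph\,\nu_u(0,a)=\nu_u\!\left(0,\int_{E^\Phi}(-t)\,d\gamma^\Phi(t)\right)\ge\int_{E^\Phi}\nu_u(0,-t)\,d\gamma^\Phi(t)\ge\tau_\vph,$$
as required.

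For the lower bound I would exhibit an explicit element of $\N_{\vph,\Omega}$ with the right singularity. Fix $R'$ so that $|z_k|<R'$ on $\Omega$ for each $k$, and set $v_k(z):=a_k^{-1}(\log|z_k|-\log R')$. Each $v_k$ is nonpositive on $\Omega$ and satisfies $\tilde\nu(v_k,\vph)=a_k^{-1}\,\tilde\nu(\log|z_k|,\vph)=1$, so $v_k\in\N^0_{\vph,\Omega}$ and $u_0:=\max_{1\le k\le n}v_k\in\N^n_{\vph,\Omega}\subset\N_{\vph,\Omega}$. A direct bookkeeping of the constants $a_k^{-1}\log R'$ shows $u_0=\phi_{a,0}+O(1)$ near $0$, whence $d_{\vph,\Omega}\ge u_0\ge\phi_{a,0}+O(1)$ in a neighbourhood of $0$.

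Combining the two bounds gives $d_{\vph,\Omega}=\phi_{a,0}+O(1)$ near $0$, and the properties listed in Proposition~\ref{theo:hull} together with uniqueness of the Green--Zahariuta function identify $d_{\vph,\Omega}$ with $G_{\phi_{a,0},\Omega}$. The step I expect to be most delicate is the concavity/Jensen argument for $\nu_u(0,\cdot)$: one has to read off the convex-geometric structure of the indicator through (\ref{eq:lind}) and then combine concavity with positive homogeneity of degree $1$ in order to bypass the fact that $\gamma^\Phi$ is not a probability measure. The remaining items are either direct verifications of Lelong numbers or applications of results already recorded in Sections~\ref{sec:pre} and~\ref{sec:extr}.
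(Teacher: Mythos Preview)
Your proof is correct and follows essentially the same route as the paper's: both arguments apply Theorem~\ref{theo:int} to $u$ and to the coordinate functions $\log|z_k|$, identify the barycenter of $\gamma^\Phi$ with $\tau_\vph\,a$, exploit convexity of the indicator in logarithmic coordinates, exhibit $\max_k a_k^{-1}\log|z_k|+{\rm const}$ as an explicit member of $\N_{\vph,\Omega}$, and conclude by uniqueness of the Green--Zahariuta function. The one stylistic difference is that where the paper writes $\psi(t)=\sup_{b\in S}\langle b,t\rangle$ and pushes the integral inside the supremum to obtain the pointwise bound $\Psuo\le\phi_{a,0}$, you invoke Jensen's inequality for the concave, positively $1$-homogeneous function $b\mapsto\nu_u(0,b)$ to get the single value $\nu_u(0,a)\ge 1$; these are equivalent convexity arguments, and yours extracts exactly what is needed for the theorem.
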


\begin{proof}
Take any $u\in PSH_0$ with $\tnuph\ge 1$; by Theorem~\ref{theo:int},
$$ \tnuph=\int_{E^\Phi}\nu_u(0,-t)\,d\tilde\gamma^\Phi(t),$$
where $\tilde\gamma^\Phi=\tau_\vph^{-1}\gamma^\Phi$. By the
definition of the indicator, $\nu_u(0,-t)=-\psi(t)$, where
$\psi(t)=\Psuo(e^{t_1},\ldots, e^{t_n})$. Therefore, the condition
$\tnuph\ge 1$ implies
$$ \int_{E^\Phi}\psi(t)\,d\tilde\gamma^\Phi\le -1.$$

The function $\psi$ is the restriction to $\Rnm$ of the supporting
function  of a convex subset $S$ of $\Rnp$: $\psi(t)=\sup\,\{\langle
b,t\rangle:\:b\in S\}$. This gives us
$$-1\ge \int_{E^\Phi}\sup_{b\in S}\langle b,t\rangle
\,d\tilde\gamma^\Phi  \ge  \sup_{b\in S}\langle b,\int_{E^\Phi}
t\,d\tilde\gamma^\Phi\rangle=\sup_{b\in S}\langle b,\mu\rangle,$$
where the vector $\mu\in\Rnm$ has the components
$$\mu_k=\int_{E^\Phi} t_k\,d\tilde\gamma^\Phi, \quad k=1,\ldots,n.$$
Therefore the set $S$ lies in the half-space $\Pi_\mu=\{b:\langle
b,\mu\rangle \le -1\}$ and so, $\psi(t)$ is dominated by the
supporting function $\psi_\mu$ of the set $\Pi_\mu\cap\Rnp$. It is
easy to see that $\psi_\mu(t)=\max_k t_k/|\mu_k|$.

As follows from Theorem~\ref{theo:int} applied to the functions
$\log|z_k|$, $\mu_k = -a_k$ with $a_k$ defined by (\ref{eq:sk}), and
thus $\psi(t)\le \max_k t_k/a_k$. This means that
$\Psuo\le\phi_{a,0}$.

Let now $v\in\N_{\vph,\obl}$ have the form $v=\max_ju_j$ near $0$
and $\tilde\nu(u_j,\vph)\ge 1$. Then
$\Psvx=\max_j\Psi_{u_j,x}\le\phi_{a,0}$. Therefore, in view of
(\ref{eq:bound}), $v$ is dominated by the Green--Zahariuta function
$G$ for the singularity $\phi_{a,0}$, so $d_{\vph,\obl}\le G$.

Finally, the relations $\tilde\nu(a_k^{-1}\log|z_k|,\vph)=1$ imply
$d_{\vph,\obl}\ge \phi_{a,0}+C$ with some constant $C$. Since
$G=\phi_{a,0}+O(1)$ near $0$, the equality $d_{\vph,\obl}= G$
follows from the uniqueness property for the Green--Zahariuta
functions, which completes the proof.
\end{proof}

\medskip
This can be used for estimation of the functions $d_\vph$ for
arbitrary weights $\vph\in W_0$ with finite \L ojasiewicz exponent
(\ref{eq:lyap}). It is easy to see that such weights are characterized by
the property \beq\label{eq:Loind} l:=\limsup_{z\to
0}\frac{\vph(z)}{\Psi_{\vph,0}(z)}<\infty,\eeq just because the
indicator of every weight has finite \L ojasiewicz exponent.

\begin{corollary}\label{cor:Loind} If $\vph\in W_0$ satisfies
(\ref{eq:Loind}), then $d_{\vph,\Omega}\le l^{1-n}G_a$, where $G_a$ is
the Green--Zahariuta function for the singularity $\phi_{a,0}$
in $\Omega$ with $a\in\Rnp$ defined by
(\ref{eq:sk}).
\end{corollary}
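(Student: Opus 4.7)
\noindent\textit{Proof plan.} The idea is to apply Theorem~\ref{theo:ahw} to the indicator $\Psi:=\Psi_{\vph,0}$, which is homogeneous and hence almost homogeneous, and then to compare the resulting Green--Zahariuta function with $G_a$. Condition (\ref{eq:Loind}) says $\limsup_{z\to 0}\vph/\Psi=l$, while the general indicator bound $\vph\le\Psi+O(1)$ from (\ref{eq:bound}) gives $\limsup_{z\to 0}\Psi/\vph\le 1$; in particular $l\ge 1$. Two applications of Theorem~\ref{theo:CT1} then yield
$$\nu(u,\Psi)\le\nu(u,\vph)\le l^{n-1}\nu(u,\Psi),\qquad u\in PSH_0.$$

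First I would set $c:=l^{1-n}\tau_\vph/\tau_\Psi>0$. If $u_j\in\N^0_{\vph,\omega}$, so that $\nu(u_j,\vph)\ge\tau_\vph$, the left inequality above gives $\tilde\nu(u_j,\Psi)\ge c$, whence $u_j/c\in\N^0_{\Psi,\omega}$. For any $v\in\N_{\vph,\Omega}$ with $v\le\max_ju_j$ near $0$, one then has $v/c\le\max_j(u_j/c)$ and $v/c\in PSH^-(\Omega)$, so $v/c\in\N_{\Psi,\Omega}$ and hence $v\le c\,d_{\Psi,\Omega}$. By Theorem~\ref{theo:ahw} applied to the homogeneous weight $\Psi$, $d_{\Psi,\Omega}=G_{a'}$, the Green--Zahariuta function for $\phi_{a',0}$ with $a'_k=\tilde\nu(\log|z_k|,\Psi)$. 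Taking the upper envelope over $\N_{\vph,\Omega}$ gives $d_{\vph,\Omega}\le c\,G_{a'}$.

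The second step is to replace $a'$ by the vector $a$ of (\ref{eq:sk}). Applying Theorem~\ref{theo:CT1} to $u=\log|z_k|$ gives $\nu(\log|z_k|,\Psi)\le\nu(\log|z_k|,\vph)$, i.e.\ $a'_k\tau_\Psi\le a_k\tau_\vph$. Because $\log|z_k|<0$, this rearranges pointwise to $\phi_{a',0}\le(\tau_\Psi/\tau_\vph)\phi_{a,0}$, and multiplying by $c$ yields $c\,\phi_{a',0}\le l^{1-n}\phi_{a,0}$. Since a more singular weight produces a smaller Green function and Green functions scale linearly in the singularity, it follows that $c\,G_{a'}\le l^{1-n}G_a$, which combined with the previous step gives the desired bound $d_{\vph,\Omega}\le l^{1-n}G_a$.

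The main obstacle is the bookkeeping of the two Monge--Amp\`ere masses $\tau_\vph$ and $\tau_\Psi$, which differ in general when $\vph$ is not almost homogeneous. The factor $\tau_\vph/\tau_\Psi$ produced when passing from $\nu(\cdot,\vph)$ to $\nu(\cdot,\Psi)$ on the extremal classes must cancel \emph{exactly} with the inverse factor appearing when passing from $\phi_{a',0}$ to $\phi_{a,0}$, leaving the clean exponent $l^{1-n}$ of the statement; the sanity check is that in the almost homogeneous case ($l=1$) one recovers $d_{\vph,\Omega}\le G_a$, matching Theorem~\ref{theo:ahw}.
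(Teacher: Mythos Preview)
Your approach is essentially identical to the paper's: pass to the indicator $\Psi$ via Theorem~\ref{theo:CT1}, invoke Theorem~\ref{theo:ahw} for $\Psi$, and then use the other direction of Theorem~\ref{theo:CT1} on $u=\log|z_k|$ to absorb the factor $\tau_\vph/\tau_\Psi$ into the passage from $a'$ to $a$. One small slip: it is the \emph{right} inequality $\nu(u,\vph)\le l^{n-1}\nu(u,\Psi)$, not the left one, that yields $\tilde\nu(u_j,\Psi)\ge c$; the left inequality is what you (correctly) use in the second step.
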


\begin{proof} Denote $\Psi=\Psi_{\vph,0}$ and take any $u\in PSH_x$
with $\tnuph\ge 1$. By Theorem~\ref{theo:CT1}, condition
(\ref{eq:Loind}) implies the inequality $\nu(u,\Psi)\ge
l^{1-n}\tau_\vph$. Therefore, by Theorem~\ref{theo:ahw}, $u\le
l^{1-n}\tau_\vph\tau_\Psi^{-1}\phi_{b,0}+C$ near $0$, where
$b_k=\tilde\nu(\log|z_k|,\Psi)$, $k=1,\ldots,n$. This implies the
statement because $\tau_\vph\tau_\Psi^{-1}\phi_{b,0}\le\phi_{a,0}$
with $a$ defined by (\ref{eq:sk}).
\end{proof}

\medskip
In an analytic setting, Theorem \ref{theo:ahw} gives us the following result on monomial ideals. Given an ideal $\cJ$ and an $\m$-primary ideal $\cI$ of $\cO_0$, denote by $e(\cI)$ the Samuel multiplicity of $\cI$ and by $e_{n-1}(\cJ,\cI)$ the mixed mixed multiplicity, due to Teissier and Risler \cite{T} and Bivi\`{a}-Ausina \cite{BA}, of $\cJ$ and $n-1$ copies of $\cI$. Denote, furthermore, $e_k(\cI)=e_{n-1}(\m_k,\cI)$ for the principal ideal $\m_k$ generated by the function $z_k$.

Let $\cI$ be an $\m$-primary monomial ideal generated by monomials $g_1,\ldots,g_l$, $l\ge n$, and let $\vph=\max_j\log|g_j|$; observe that $\vph$ is an indicator. Then $e(\cI)=\tau_\vph$ (because in the monomial case the both values equal $n!$ times the covolume of the Newton polyhedron of the mapping $g$; in the general case, the equality is proved, for example, in \cite{D8}). Furthermore, if an ideal $cJ$ is generated by functions $f_1,\ldots,f_m$, then $e_{n-1}(\cJ,\cI)=\nu(\log|f|,\vph)$, where $f=(f_1,\ldots,f_m)$ (this follows from multilinearity of the mixed multiplicities).

The numbers $a_k$ from (\ref{eq:sk}) can now be computed now as
$$a_k=e_{n-1}(\m_k,\cI)\,e^{-1}(\cI),$$ where $\m_k$ is the principal ideal generated by the function $z_k$,
and Theorem \ref{theo:ahw} takes the following form.

\begin{theorem} Let $\cI$ be an $\m$-primary monomial ideal in $\cO_0$. If an ideal $\cJ$ satisfies
$e_{n-1}(\cJ,\cI)\ge p$, then $\cJ$ is contained in the ideal generated by the functions $z_k^{p_k}$, where $$p_k=\min\,\{q\in \Z_+:\: p/q \le e_{n-1}(\m_k,\cI)\},  \quad k=1,\ldots, n.$$
\end{theorem}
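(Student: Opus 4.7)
The plan is to reduce the claim to the plurisubharmonic extremal problem solved in Theorem~\ref{theo:ahw}, applied to the weight $\vph=\max_j\log|g_j|$ built from the monomial generators $g_1,\ldots,g_l$ of $\cI$. Because each $g_j$ is a monomial, $\vph$ satisfies the homogeneity relation (\ref{eq:hom}) and coincides with its own indicator; in particular, it is almost homogeneous in the sense of (\ref{eq:ahw}), so Theorem~\ref{theo:ahw} applies.

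From the discussion just preceding the statement one has $\tau_\vph=e(\cI)$ and $\nu(\log|f|,\vph)=e_{n-1}(\cJ,\cI)$ for any tuple $f=(f_1,\ldots,f_m)$ generating $\cJ$. Specializing to $u=\log|z_k|$ identifies $a_k=\tilde\nu(\log|z_k|,\vph)$ with $e_{n-1}(\m_k,\cI)/e(\cI)$. Fix a bounded hyperconvex neighborhood $\Omega$ of $0$ and assume $e_{n-1}(\cJ,\cI)\ge p$, so that $\tilde\nu(\log|f|,\vph)\ge p/e(\cI)$. Applying Proposition~\ref{theo:hull}(iv) to $u=\log|f|$, together with Theorem~\ref{theo:ahw} and the property $G_{\phi_{a,0},\Omega}=\phi_{a,0}+O(1)$ near $0$ recorded after (\ref{eq:gzf}), yields
$$\log|f(z)|\le \max_k q_k\log|z_k|+O(1),\qquad q_k:=p/e_{n-1}(\m_k,\cI),$$
near $0$, and the same estimate for each individual generator $f_j$ of $\cJ$ (since $|f_j|\le|f|$).

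To finish, I would convert $|f_j(z)|\le C\max_k|z_k|^{q_k}$ into membership in the monomial ideal $(z_1^{p_1},\ldots,z_n^{p_n})$ with $p_k=\lceil q_k\rceil$ via a Cauchy coefficient estimate on polydiscs of radii $|z_k|=r^{1/q_k}$: the Taylor coefficient $c_\alpha$ of $f_j$ must vanish whenever $\sum_k\alpha_k/q_k<1$, which places $f_j$ in the integral closure of $(z_1^{p_1},\ldots,z_n^{p_n})$.

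The substantive step is the middle one, where Theorem~\ref{theo:ahw} collapses all information about $\vph$ into the simplicial model $\phi_{a,0}$ and thereby produces a clean pointwise bound on $|f|$. The final Cauchy-estimate step is routine; the only subtlety worth flagging is that the analytic bound naturally captures \emph{integral-closure} membership, so the conclusion ``$\cJ\subset(z_1^{p_1},\ldots,z_n^{p_n})$'' should be read in that sense, as one already sees in the model case $\cI=\m$, where $p_k=p$ and the conclusion reads $\cJ\subset\overline{(z_1^p,\ldots,z_n^p)}=\m^p$.
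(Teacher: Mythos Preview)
Your reduction to Theorem~\ref{theo:ahw} via the dictionary $e(\cI)=\tau_\vph$, $e_{n-1}(\cJ,\cI)=\nu(\log|f|,\vph)$, $a_k=e_{n-1}(\m_k,\cI)/e(\cI)$ is exactly the paper's approach (indeed, the paper offers no separate proof: the theorem is presented as the algebraic translation of Theorem~\ref{theo:ahw}), and your derivation of the bound $\log|f|\le\max_k q_k\log|z_k|+O(1)$ with $q_k=p/e_{n-1}(\m_k,\cI)$ is correct.

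The gap is in your last step. Your Cauchy argument shows that every surviving monomial $z^\alpha$ of $f_j$ satisfies $\sum_k\alpha_k/q_k\ge 1$, but this does \emph{not} place $f_j$ in $\overline{(z_1^{p_1},\ldots,z_n^{p_n})}$: since $p_k=\lceil q_k\rceil\ge q_k$, one has $\sum_k\alpha_k/p_k\le\sum_k\alpha_k/q_k$, and the inequality goes the wrong way. Concretely, take $n=2$, $\cI=(z_1,z_2^2)$, $p=3$. Then $e_1(\m_1,\cI)=2$, $e_1(\m_2,\cI)=1$, so $q_1=3/2$, $q_2=3$, whence $p_1=2$, $p_2=3$. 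The ideal $\cJ=(z_1z_2)$ has $e_1(\cJ,\cI)=\nu(\log|z_1|,\vph)+\nu(\log|z_2|,\vph)=2+1=3\ge p$, yet $z_1z_2\notin(z_1^2,z_2^3)$ and even $z_1z_2\notin\overline{(z_1^2,z_2^3)}$, since $\tfrac12+\tfrac13=\tfrac56<1$.

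So the stated conclusion is actually too strong; what Theorem~\ref{theo:ahw} genuinely yields is that $\cJ$ lies in the integrally closed monomial ideal
\[
\Bigl\{\,z^\alpha:\ \sum_{k=1}^n\alpha_k\,e_{n-1}(\m_k,\cI)\ge p\,\Bigr\},
\]
which contains $\overline{(z_1^{p_1},\ldots,z_n^{p_n})}$ but can be strictly larger. Your instinct to read the conclusion up to integral closure was correct (and your $\cI=\m$ check works precisely because there $q_k=p_k=p$), but the rounding $q_k\mapsto p_k$ introduces a further loss that neither your argument nor the paper's formulation accounts for.
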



\section{The case of (asymptotically) analytic weights}\label{sec:tame}

Given $\phi=c\log|F|\in AW_x$, denote by ${\mathfrak a}$ the
primary ideal generated by $F_1,\ldots,F_n$. By \cite[Prop. 4.10]{BFaJ}, there exists a simple modification $\pi:X_\pi\to X$
above a neighbourhood $X$ of $x$ that is an isomorphism above
$X\setminus x$, with a normal crossing exceptional divisor
$\pi^{-1}(x)$, such that $\pi^{-1}{\mathfrak a}$ is a principal
ideal and the measure $MA(g_\phi)$ is a finite sum of weighted Dirac
measures with masses $c_i=c^{n-1}\,I_i$ at the divisorial valuations
({\it Rees valuations}) ${\rm ord}_{E_i}$  over the irreducible components $E_1,\ldots,E_N$
of $\pi^{-1}(x)$. Here ${\rm ord}_{E_i}(f)$ is the vanishing order
of  $f\circ\pi$ along $E_i$, and $I_i$ is the intersection number of
the $n$-tuple $(E_i,\pi^{-1}{\mathfrak a},\ldots,\pi^{-1}{\mathfrak
a})$.
Therefore, (\ref{eq:reptame}) takes the form
$$\tilde\nu(u,\phi)=\sum_{1\le i\le N} a_{i}\nu_{i}(u),\quad u\in PSH(\Omega),$$ where
$a_i=\tau_\phi^{-1}c_i$ and $\nu_{i}(u)$ is the generic Lelong number
of the function $u\circ\pi$ along the set $E_i$.

By \cite[Thm. 4.3]{R7}, there exist maximal weights $\phi_i$ such that $a_i\nu_{i}(u)=\sigma(u,\phi_i)$ for every $u$, and in \cite[Thm. 5.13]{BFaJ} these weights are proved to be continuous and tame. We will call them {\it elementary representing weights} for the weight $\phi$. Tameness of these weights implies $\sigma(\phi_k,\phi_j)>0$ for all $k,j$.

If $\tilde\nu(u,\phi)\ge 1$, then
$$\sum_{1\le i\le N} \sigma(u,\phi_i)\ge 1.$$
This implies existence of positive numbers $\beta_1,\ldots,\beta_N$ with $\sum_i\beta_i\ge1$, such that
$\sigma(u,\phi_i)\ge\beta_i$ and thus, by (\ref{eq:rtbound}), $u(z)\le \beta_i\,\phi_i(z)+O(1)$ near $x$. Therefore,
\beq\label{eq:beta}u(z)\le \phi_{\langle\beta\rangle}+O(1),\quad z\to x,\eeq
where $\phi_{\langle\beta\rangle}$ denotes the greatest plurisubharmonic minorant of the function $\min_i\,\beta_i\phi_i$. Notice that $\sigma(\phi_{\langle\beta\rangle},\phi_i)\ge\beta_i\ge \inf_{k,j}\sigma(\phi_k,\phi_j)>0$ for all $i$.

Furthermore, let $\beta_j^0$ denote the the least lower bound for $\sigma(u,\phi_j)$ over all $u$ with  $\tilde\nu(u,\phi)\ge 1$. Then
\beq\label{eq:beta0}\phi_{\langle\beta\rangle}\le \phi_{\langle\beta^0\rangle},\quad \beta\in \Pi_1,\eeq
where $\Pi_1=\{\beta\in\R^N:\sum_i\beta_i\ge 1\}$. Note that $\beta^0$ need not belong to $\Pi_1$.

By Proposition~\ref{prop:tmin}, $\phi_{\langle\beta\rangle}$ is a tame maximal weight and the constant $C_\vph$ in (\ref{eq:Dapt}) for $\vph=\phi_{\langle\beta\rangle}$ is independent of $\beta$; let us call it $C$. Then the Green--Zahariuta function $G_{\langle\beta\rangle}$ of $\Omega$ for the singularity
$\phi_{\langle\beta\rangle}$ satisfies
\beq\label{eq:tamebeta} G_{\langle\beta\rangle}\le G_{\langle\beta\rangle,k}\le (1-C/k)G_{\langle\beta\rangle},\eeq
where $G_{\langle\beta\rangle,k}$ are the Green functions for Demailly's approximations of $G_{\langle\beta\rangle}$.

Any function $u\in\N_{\phi,\obl}$ satisfies $u\le\max_j\,u_j$ for finitely many functions $u_j$ with $\tilde\nu(u_j,\phi)\ge 1$. Since $u$ is negative in $\obl$, (\ref{eq:beta}) applied to each $u_j$ implies the relation
$u\le \max\,\{G_{\langle\beta\rangle}:\: \beta\in B_u\}$,
where $B_u$ is a finite subset of the half-space $\Pi_1$, depending on $u$.
Therefore, by (\ref{eq:beta0}),
$$d_{\phi,\obl}(z)\le \sup\,\{G_{\langle\beta\rangle}(z):\: \beta\in \Pi_1\}=G_{\langle\beta^0\rangle}(z).$$
Since for $\beta\in\Pi_1$, $$\tilde\nu(G_{\langle\beta\rangle},\phi)=\tilde\nu(\phi_{\langle\beta\rangle},\phi)
=\sum_{1\le i\le N} \sigma(\phi_{\langle\beta\rangle},\phi_i)\ge 1$$ and thus $G_{\langle\beta\rangle}\in\N^0_{\phi,\obl}$, we have actually the equality $d_{\phi,\obl}= G_{\langle\beta^0\rangle}$.

In addition, (\ref{eq:tamebeta}) with $\beta=\beta^0$ implies
$$ d_{\phi,\obl}\le d_{\phi,\obl,k}+O(1)\le (1-C/k)d_{\phi,\obl}$$ and thus its continuity and tameness.

\medskip

Finally, if $\psi$ is an asymptotically analytic weight, then (\ref{eq:app}) implies
$$(1+\epsilon)^{n-1}d_{\vph_\epsilon,\obl}\le d_{\psi,\obl}\le (1-\epsilon)^{n-1}d_{\vph_\epsilon,\obl},$$
thus $d_{\psi,\obl}$ is asymptotically analytic (since so are $d_{\vph_\epsilon,\obl}$) and
$$ d_{\psi,\obl}=\sup_{\epsilon>0}\,(1+\epsilon)^{n-1}d_{\vph_\epsilon,\obl}$$
which gives, in particular, its continuity.

As a result, we have got the following properties of the extremal functions for analytic and asymptotically analytic weights.

\begin{theorem} If $\psi$ is an asymptotically analytic weight and $\obl$ is a bounded hyperconvex domain, then the function $d_{\psi,\obl}$ is continuous and has asymptotically analytic singularity. If $\psi$ has analytic singularity, then $d_{\psi,\obl}$ is tame.
\end{theorem}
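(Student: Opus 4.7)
The plan is to realize the proof already sketched in the paragraphs preceding the statement: handle the analytic case first, identifying $d_{\phi,\obl}$ with the Green--Zahariuta function of an explicit tame weight, and then squeeze the asymptotically analytic case between two analytic ones via \eqref{eq:app}.

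For $\phi=c\log|F|\in AW_x$, I would begin by expanding the integral representation \eqref{eq:reptame} as the finite sum
\[
\tilde\nu(u,\phi)=\sum_{1\le i\le N}\sigma(u,\phi_i),
\]
using \cite[Thm.~4.3]{R7}, where $\phi_i$ are the elementary representing tame maximal weights attached to the Rees valuations of a log resolution of $\phi$. Given $u\in\N^0_{\phi,\obl}$, there exist positive $\beta=(\beta_1,\ldots,\beta_N)\in\Pi_1$ with $\sigma(u,\phi_i)\ge\beta_i$ for all $i$; the relative-type bound \eqref{eq:rtbound} then yields $u\le\phi_{\langle\beta\rangle}+O(1)$ near $x$, where $\phi_{\langle\beta\rangle}$ is the largest plurisubharmonic minorant of $\min_i\beta_i\phi_i$. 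Passing to finite maxima and to the envelope, and invoking \eqref{eq:beta0}, gives
\[
d_{\phi,\obl}\le\sup\{G_{\langle\beta\rangle}:\beta\in\Pi_1\}=G_{\langle\beta^0\rangle}.
\]
The reverse inequality is immediate because each $G_{\langle\beta\rangle}$ with $\beta\in\Pi_1$ satisfies $\tilde\nu(G_{\langle\beta\rangle},\phi)\ge 1$ and hence belongs to $\N^0_{\phi,\obl}$.

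To conclude in the analytic case I would apply Proposition~\ref{prop:tmin} iteratively to the pair $(\phi_1,\ldots,\phi_N)$, producing a tameness constant $C=\max_i C_{\phi_i}$ that is independent of $\beta$. Combined with \eqref{eq:Dapt1} this gives the two-sided estimate \eqref{eq:tamebeta} for $G_{\langle\beta^0\rangle}$, hence for $d_{\phi,\obl}$, which establishes tameness; continuity on $\obl\setminus\{x\}$ then follows from continuity of Green--Zahariuta functions with continuous singularity (the tameness inequality forces $d_{\phi,\obl}$ to have analytic singularity asymptotic to $\phi_{\langle\beta^0\rangle}$).

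For an asymptotically analytic $\psi$, I would use \eqref{eq:app} to get, for each $\epsilon>0$, an analytic weight $\vph_\epsilon$ whose Lelong--Demailly numbers sandwich those of $\psi$. Translating this into the membership conditions defining $\N_{\psi,\obl}$ and $\N_{\vph_\epsilon,\obl}$ yields
\[
(1+\epsilon)^{n-1}d_{\vph_\epsilon,\obl}\le d_{\psi,\obl}\le (1-\epsilon)^{n-1}d_{\vph_\epsilon,\obl},
\]
so that $d_{\psi,\obl}=\sup_{\epsilon>0}(1+\epsilon)^{n-1}d_{\vph_\epsilon,\obl}$. Since each $d_{\vph_\epsilon,\obl}$ is continuous with analytic singularity by the previous step, this sandwich shows that $d_{\psi,\obl}$ is continuous and has asymptotically analytic singularity.

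The main obstacle I anticipate is the third step: verifying that the tameness constant produced by Proposition~\ref{prop:tmin} is genuinely uniform in $\beta\in\Pi_1$ and that the $\sup$ over the cone $\Pi_1$ of Green--Zahariuta functions $G_{\langle\beta\rangle}$ really is attained by a single $\beta^0$ for which the resulting weight is still tame with the same constant. Once that uniformity is in hand, the remaining passages are formal consequences of Demailly's approximation \eqref{eq:Dapt}--\eqref{eq:Dapt1} and the comparison theorems \ref{theo:CT2}--\ref{theo:CT1}.
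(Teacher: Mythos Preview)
Your proposal is correct and follows essentially the same route as the paper: the analytic case is handled by the finite Rees-valuation decomposition of $\tilde\nu(\cdot,\phi)$, the identification $d_{\phi,\obl}=G_{\langle\beta^0\rangle}$ via \eqref{eq:beta}--\eqref{eq:beta0}, and the uniform tameness constant from Proposition~\ref{prop:tmin}; the asymptotically analytic case is then squeezed exactly as you describe through \eqref{eq:app}. The obstacle you flag is precisely what Proposition~\ref{prop:tmin} is designed to remove (the constant there is explicitly stated to be independent of the positive coefficients), so no further work is needed beyond iterating it over the $N$ elementary weights.
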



\section{Flat weights}\label{sec:flat}
Here we consider the situation when the function
$d_{\vph,\Omega}$ has the same asymptotic as $\vph$. Assuming
$\vph$ to be maximal, it is easy to see that this happens if and
only if the weight $\vph$ is flat (i.e., satisfying $\tnuph=\suph$
for all $u$, see Section~\ref{sec:extr}). We denote the class of all
flat maximal weights centered at $x$ by $FW_x$.

\medskip
As follows from the definition of relative types, flat weights
$\vph$ satisfy the relation
$\nu(\max\,\{u,v\},\vph)=\min\,\{\nu(u,\vph),\nu(v,\vph)\}$ for any
plurisubharmonic functions $u$ and $v$. We will show that this is a
characteristic property of the class of flat weights (see
Corollary~\ref{cor:trop} below). Moreover, it suffices to check it
only on $u$ with $\tnuph\ge 1$ and $v=\vph$
(Corollary~\ref{cor:extr}).

The crucial step in proving the claim is the following extremal
property of maximal weights.

\begin{lemma}\label{lemma:maxpr} Let $\vph\in MW_x(B_R^\vph)$. If
$v\in PSH(B_R^\vph)$ is such that $v\ge\vph$ in $B_R^\vph$, $v(x)\to
R$ as $x\to\partial B_R^\vph$, and $\nu(v,\vph)=\tau_\vph$, then
$v\equiv\vph$.
\end{lemma}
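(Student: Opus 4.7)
The plan is to combine the Lelong--Jensen--Demailly formula (\ref{eq:LJDf}) with the boundary hypothesis to force the swept-out averages $\mu_r^\vph(v)$ to take the exact value $r\tau_\vph$ on $(-\infty,R)$, and then to conclude $v\equiv \vph$ via a Monge--Amp\`ere uniqueness argument.

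Set $h_v(t):=\int_{B_t^\vph}dd^cv\wedge (dd^c\vph)^{n-1}$. By (\ref{eq:gen}), $h_v(t)\to \nu(v,\vph)=\tau_\vph$ as $t\to-\infty$, and monotonicity of $h_v$ in $t$ gives $h_v\ge \tau_\vph$ throughout $(-\infty,R)$. The differentiated form of (\ref{eq:LJDf}), namely $\mu_r^\vph(v)-\mu_s^\vph(v)=\int_s^r h_v(t)\,dt$ for $s<r<R$, shows that $F(r):=\mu_r^\vph(v)-r\tau_\vph$ is nondecreasing. Since $v\ge \vph=r$ on $\operatorname{supp}\mu_r^\vph\subset S_r^\vph$ and $\mu_r^\vph$ has total mass $\tau_\vph$, one has $F\ge 0$. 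The boundary condition $v\to R$ on $\partial B_R^\vph$, together with $v\le R$ from the maximum principle, yields $\mu_r^\vph(v)\to R\tau_\vph$ as $r\nearrow R$, so $F\to 0$. A nondecreasing nonnegative function tending to $0$ at its upper endpoint is identically zero, hence $\mu_r^\vph(v)\equiv r\tau_\vph$ and $h_v\equiv\tau_\vph$. In particular, $dd^cv\wedge (dd^c\vph)^{n-1}$ is concentrated at $x$ with mass $\tau_\vph$, so it coincides with $(dd^c\vph)^n$ on $B_R^\vph$.

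To pass from this to $v\equiv \vph$, note first that $\nu(v,\vph)>0$ together with (\ref{eq:nuph}) forces $v(x)=-\infty$, so $x\notin\{v>\vph\}$. Applying the Bedford--Taylor comparison principle to the bounded regularizations $v_k:=\max\{v,-k\}$ and $\vph_k:=\max\{\vph,-k\}$ (which agree with $v$ and $\vph$ near $\partial B_R^\vph$ for $k>-R$) and passing to the limit $k\to\infty$ yields $(dd^cv)^n(\{v>\vph\})\le (dd^c\vph)^n(\{v>\vph\})=\tau_\vph\delta_x(\{v>\vph\})=0$. Thus on $\{v>\vph\}$ both $v$ and $\vph$ are maximal plurisubharmonic, share boundary values (the common value $R$ on $\partial B_R^\vph$ and $v=\vph$ on the remainder of $\partial\{v>\vph\}$), and exhibit matching mixed singular behaviour at $x$ by the first step; uniqueness of the Dirichlet problem for $(dd^c\cdot)^n=0$ then forces $v=\vph$ on $\{v>\vph\}$, which is compatible with the strict inequality only if $\{v>\vph\}=\emptyset$, whence $v\equiv\vph$.

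The main obstacle is the last uniqueness step: $\{v>\vph\}$ need not be open in the Euclidean topology, since $v-\vph$ is only upper semicontinuous, so one must either work modulo pluripolar sets via quasi-continuity of plurisubharmonic functions, or invoke a strong maximum principle for maximal plurisubharmonic functions adapted to the matching singularity at $x$.
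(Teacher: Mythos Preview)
Your first paragraph is correct and reaches the key intermediate conclusion of the paper, namely $\mrph(v)=r\,\tau_\vph$ for all $r<R$, and hence $dd^cv\wedge(dd^c\vph)^{n-1}=\tau_\vph\,\delta_x$. The paper arrives at the same point by a slightly different device: it works with the auxiliary function $w_\epsilon=\max\{v-\epsilon,\vph\}$, which agrees with $\vph$ near $\partial B_R^\vph$ so that an integration by parts yields the total mass directly, and then applies the Lelong--Jensen--Demailly formula to $w_\epsilon$. Your route, exploiting monotonicity of $F(r)=\mrph(v)-r\,\tau_\vph$ together with the boundary limit $v\to R$, is a legitimate and arguably cleaner alternative that dispenses with the $\epsilon$.

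The genuine gap is in your second paragraph, and you have correctly located it. The attempt to conclude via the comparison principle and Dirichlet uniqueness on $\{v>\vph\}$ cannot be completed as written: that set need not be open, its ``boundary values'' are ill defined, and the passage from $(dd^cv_k)^n$ to $(dd^cv)^n$ near the singular point $x$ is delicate. The paper avoids all of this. From $\mrph(v)=r\,\tau_\vph$, $v\ge\vph=r$ on $\operatorname{supp}\mrph\subset S_r^\vph$, and $\mrph(S_r^\vph)=\tau_\vph$, one gets $\int(v-r)\,d\mrph=0$ with $v-r\ge 0$, so $v=r$ holds $\mrph$-almost everywhere. Demailly's maximum principle \cite[Thm.~5.1]{D3} then says that $\sup\{v(z):z\in B_r^\vph\}$ equals the $\mrph$-essential supremum of $v$, namely $r$. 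Hence $v\le r$ on $B_r^\vph$ for every $r<R$, i.e.\ $v\le\vph$, and combined with the hypothesis $v\ge\vph$ the lemma follows. The missing ingredient is thus this pseudoball maximum principle for maximal weights, not a Dirichlet-type uniqueness argument.
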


\begin{proof} We can assume $R=0$, then $\vph=
G_{\vph,D}$ with $D=B_0^\vph$.

Given $\epsilon>0$, let $w_\epsilon=\max\{v-\epsilon,\vph\}$. Since
$w_\epsilon=\vph$ near $\partial D$, we have \beq\label{eq:prm1}
\int_{D} dd^cw_\epsilon\wedge (dd^c\vph)^{n-1}= \int_{D}
(dd^c\vph)^{n}=\tau_\vph.\eeq The inequality $v\ge\vph$ implies
$\vph\le w_\epsilon\le v$. Since
$\nu(w,\vph)=\nu(v,\vph)=\tau_\vph$, this gives the relation
$\nu(w_\epsilon,\vph)=\tau_\vph$, so
$$ \int_{D} dd^cw_\epsilon\wedge
(dd^c\vph)^{n-1}= \tau_\vph+ \int_{D\setminus\{x\}}
dd^cw_\epsilon\wedge (dd^c\vph)^{n-1}.$$ Comparing this with
(\ref{eq:prm1}), we get \beq\label{eq:prm2} dd^cw_\epsilon\wedge
(dd^c\vph)^{n-1}=\tau_\phi\delta_x.\eeq By the
Lelong--Jensen--Demailly formula (\ref{eq:LJDf}),
$$ \mu_r^\vph(w_\epsilon)=\mu_{r_0}^\vph(w_\epsilon)+
\int_{r_0}^r\int_{B_t^\vph} dd^cw_\epsilon\wedge
(dd^c\vph)^{n-1}\,dt,\quad r<r_0<0.$$ Choose $r_0=r_0(\epsilon)<0$
such that $w_\epsilon=\vph$ near $\partial B_{r_0}^\vph$, then
$\mu_{r_0}^\vph(w_\epsilon)=r_0\tau_\vph$, while the second term, by
(\ref{eq:prm2}), equals $(r-r_0)\tau_\vph$.  Therefore
$\mu_r^\vph(w_\epsilon)=r\tau_\vph$, $r<r_0$. By the construction of
the function $w_\epsilon$, this means
$$ \int \max\{v-\epsilon -r, 0\}\,d\mu_r^\vph=0,\quad r<r_0,$$
which implies \beq\label{eq:prm3} \mu_r^\vph(\{v-\epsilon >
r\})=0,\quad  \ r<r_0.\eeq By Demailly's maximum principle
\cite[ Thm.~5.1]{D3}, $\sup\,\{v(z)-\epsilon:\vph(z)<r\}$ equals the
essential supremum of $v-\epsilon$ with respect to the measure
$\mu_r^\vph$, so $v-\epsilon\le r$ in $B_r^\vph$ for all $r<r_0$ and
$\sigma(v,\vph)=\sigma(v-\epsilon,\vph)\ge 1$. Therefore, $v\le
G_{\vph,D}= \vph$.
\end{proof}

\begin{corollary}\label{cor:extr} If a weight $\vph\in MW_x$ is such that
$\tilde\nu(\max\,\{w,\vph\},\vph)=1$ for every $w\in PSH_x$ with
$\tilde\nu(w,\vph)\ge 1$, then $\vph\in FW_x$.
\end{corollary}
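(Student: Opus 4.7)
The plan is to derive flatness, i.e., $\suph = \tnuph$ for every $u \in PSH_x$, by establishing $\suph \ge \tnuph$; the reverse inequality is supplied by (\ref{eq:sunu}). The case $\tnuph = 0$ is handled directly: subtracting a large constant (which leaves both $\tnuph$ and $\suph$ unchanged asymptotically, since $1/\vph \to 0$ at $x$) we may take $u \le 0$ near $x$, so $u/\vph \ge 0$ there, giving $\suph \ge 0$, and combined with $\suph \le \tnuph = 0$ this forces $\suph = 0$.

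For the main case $c := \tnuph > 0$, set $w = u/c$, so $\tilde\nu(w,\vph) = 1$ and the hypothesis yields $\nu(\max\{w,\vph\},\vph) = \tau_\vph$. The target becomes $w \le \vph + O(1)$ near $x$, as this gives $\sigma(w,\vph) \ge 1$ and hence $\suph = c\,\sigma(w,\vph) \ge c$. To reach this conclusion through Lemma \ref{lemma:maxpr}, one needs a plurisubharmonic function on a whole sublevel set $B_R^\vph$ that dominates $\vph$, matches $\vph$ on $\partial B_R^\vph$, and carries the maximal residual mass $\tau_\vph$.

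I would globalize the germ $\max\{w,\vph\}$ as follows: choose $R<0$ so negative that $B_R^\vph$ lies inside the domain of $w$, take $R'<R$ and a bound $M$ for $w$ on the compact annulus $\{R'\le\vph\le R\}$, and set $K := M - R'$. Then $w-K\le\vph$ on that annulus, and the function
$$
v := \max\{w-K,\vph\}\ \text{on}\ \{\vph<R'\},\qquad v := \vph\ \text{on}\ \{R'\le\vph<R\}
$$
glues to a plurisubharmonic function on $B_R^\vph$ with $v\ge\vph$ and $v(z)\to R$ as $z\to\partial B_R^\vph$. Lemma \ref{lemma:maxpr} then forces $v\equiv\vph$, which locally reads $\max\{w-K,\vph\}=\vph$ and yields $w\le\vph+K$ near $x$, as required.

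The one delicate verification is $\nu(v,\vph) = \tau_\vph$. Writing $v^\sharp := \max\{w,\vph\}$ for the function furnished by the hypothesis, one has the pointwise inequalities $v^\sharp - K \le v \le v^\sharp$ near $x$. Since $\nu(v^\sharp,\vph) = \tau_\vph > 0$ forces $v^\sharp(z) \to -\infty$ at $x$, the ratio $v/v^\sharp$ tends to $1$ there, and two applications of Theorem \ref{theo:CT2} (swapping the roles of $v$ and $v^\sharp$ as numerator and denominator) pin down $\nu(v,\vph)=\nu(v^\sharp,\vph)=\tau_\vph$. I expect this Lelong-number comparison to be the main obstacle: the naive bound $\nu(v,\vph)\le\tau_\vph$ from $v\ge\vph$ is only one-sided, and the matching lower bound must be extracted by comparing the shifted and unshifted maxima to each other rather than directly to $\vph$.
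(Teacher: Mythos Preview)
Your overall strategy coincides with the paper's: normalize to $\tilde\nu(w,\vph)\ge 1$, build $v=\max\{w+\text{const},\vph\}$ on a sublevel set $B_R^\vph$ so that Lemma~\ref{lemma:maxpr} applies, and read off $w\le\vph+O(1)$. The paper makes the simpler choice $v=\max\{w+R,\vph\}$ with $R$ chosen so that $w<0$ on $B_R^\vph$; your annulus construction with $K=M-R'$ achieves the same end.

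There is one genuine gap, in your verification of $\nu(v,\vph)=\tau_\vph$. You assert that $\nu(v^\sharp,\vph)=\tau_\vph>0$ forces $v^\sharp(z)\to-\infty$ as $z\to x$, and then use this to conclude $v/v^\sharp\to 1$. But positivity of the generalized Lelong number only yields (via (\ref{eq:nuph})) that $v^\sharp$ is unbounded below near $x$, not that it tends to $-\infty$ along every sequence. If $w$ stays bounded along some sequence $z_j\to x$ (nothing so far excludes this), then $v^\sharp(z_j)=w(z_j)$ is bounded while $v(z_j)=w(z_j)-K$, and the ratio $v/v^\sharp$ is bounded away from $1$ (or even unbounded if $w(z_j)\to 0^-$). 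Theorem~\ref{theo:CT2} then gives no useful conclusion.

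The fix is much simpler than the route you took: since $dd^c(w-K)=dd^cw$, one has $\tilde\nu(w-K,\vph)=\tilde\nu(w,\vph)\ge 1$, so the \emph{hypothesis of the corollary itself}, applied to $w-K$ in place of $w$, yields directly $\tilde\nu(\max\{w-K,\vph\},\vph)=1$, i.e.\ $\nu(v,\vph)=\tau_\vph$. This is exactly what the paper does (implicitly) with its shift $w\mapsto w+R$. With this correction your proof is complete and matches the paper's.
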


\begin{proof} Take any $w\in PSH_x$ with
$\tilde\nu(w,\vph)\ge 1$ and a real $R$ such that  $w<0$ in $B_R^\vph$ and $\vph\in MW_x(B_R^\vph)$. Then the function
$v:=\max\{w+R,\vph\}$ satisfies the conditions of
Lemma~\ref{lemma:maxpr}, so $v\equiv\vph$ and consequently,
$w\le\vph-R$ in $B_R^\vph$; in other words, $\sigma(w,\vph)\ge 1$.

Given now $u\in PSH_x$ with $\nu(u,\vph)>0$, the
function $w=[\tilde\nu(u,\vph)]^{-1} u$ satisfies
$\tilde\nu(w,\vph)=1$ and so, as we have just shown, $\sigma(w,\vph)\ge 1$, which means
$\tilde\nu(u,\vph)\le\sigma(u,\vph)$. In view of (\ref{eq:sunu})
this completes the proof.
\end{proof}

\begin{corollary}\label{cor:trop} If $\vph\in MW_x$ is such that
$\nu(\max_j\,u_j,\vph)=\min_j\,\nu(u_j,\vph)$ for any
plurisubharmonic functions $u_j$, $j=1,2$, then $\vph\in FW_x$.
\end{corollary}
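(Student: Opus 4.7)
The plan is to deduce Corollary \ref{cor:trop} directly from Corollary \ref{cor:extr}, which was just proved. Corollary \ref{cor:extr} tells us that to verify flatness of $\vph$ it is enough to check the single equality $\tilde\nu(\max\{w,\vph\},\vph)=1$ for every $w\in PSH_x$ with $\tilde\nu(w,\vph)\ge 1$. So the task reduces to extracting this one identity from the more general hypothesis, namely $\nu(\max\{u_1,u_2\},\vph)=\min\{\nu(u_1,\vph),\nu(u_2,\vph)\}$ for all plurisubharmonic $u_1,u_2$.

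I would proceed as follows. Fix an arbitrary $w\in PSH_x$ with $\tilde\nu(w,\vph)\ge 1$, and apply the standing hypothesis to the pair $u_1=w$, $u_2=\vph$. This yields
\[
\nu(\max\{w,\vph\},\vph)=\min\{\nu(w,\vph),\nu(\vph,\vph)\}=\min\{\nu(w,\vph),\tau_\vph\}.
\]
Dividing both sides by $\tau_\vph=\nu(\vph,\vph)>0$ gives $\tilde\nu(\max\{w,\vph\},\vph)=\min\{\tilde\nu(w,\vph),1\}$, and the assumption $\tilde\nu(w,\vph)\ge 1$ then forces this minimum to equal $1$. Hence the hypothesis of Corollary \ref{cor:extr} is satisfied, and we conclude $\vph\in FW_x$.

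Since the derivation is a one-line specialization of the hypothesis followed by invoking the previous corollary, there is essentially no obstacle here; the real work has already been done in Lemma \ref{lemma:maxpr} and Corollary \ref{cor:extr}. The only thing worth double-checking is that the $\min$-equality is allowed to be applied with one of the arguments being the weight $\vph$ itself, which is legitimate since $\vph\in PSH_x$.
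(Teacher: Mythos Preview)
Your proof is correct and follows exactly the route the paper intends: the paper states Corollary~\ref{cor:trop} without proof, as an immediate consequence of Corollary~\ref{cor:extr} obtained by specializing the hypothesis to the pair $(w,\vph)$.
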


We can also give some "inner" sufficient conditions for a weight to
be flat. Let us assume that there exist numbers $K\ge 0$, $\tau>0$,
and a family of plurisubharmonic functions
$\{\vph_x\}_{x\in\omega}$, where $\omega$ is a bounded domain in $\Cn$, such that
 \beq\label{eq:f1}\vph_x\in MW_x(\omega),\eeq
\beq\label{eq:f3}\vph_y(x)\le
K+\max\,\{\vph_z(x),\vph_z(y)\},\eeq
\beq\label{eq:f4}\tau_{\vph_x}=\tau\eeq
for all $x,y,z\in\omega$.

For example, this is the case for a family $\vph_x(y)=\vph(y-x)$ with a weight $\vph\in MW_0(\Omega)$ in a neighbourhood $\Omega$ of $0\in\Cn$ that
satisfies \beq\label{eq:sum} \vph(y-x)\le
K+\max\,\{\vph(x),\vph(y)\},\quad x,y\in\omega.\eeq

Since  $\vph_x(x)=-\infty$, (\ref{eq:f3}) implies, in particular,
\beq\label{eq:minus} \vph_y(x)\le K+ \vph_x(y);\eeq together with (\ref{eq:f3}), this
means that the function
$\rho(x,y)=\exp(\vph_y(x))$ is a non-Archimedean quasi-metric on $\omega$, that is,
$$\rho(x,x)=0,\quad \rho(x,y)\le
C\rho(y,x),\quad \rho(x,y)\le C\max\{\rho(x,z),\rho(y,z)\}.$$

\begin{theorem}\label{theo:equality} Let  a family of
weights $\vph_x$, $x\in\omega$, satisfy conditions (\ref{eq:f1})--(\ref{eq:f4}). Then $\vph_x\in FW_x$.
\end{theorem}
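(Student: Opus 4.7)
The plan is to apply Corollary~\ref{cor:extr}: fixing $x\in\omega$, it suffices to prove that every $u\in PSH_x$ with $c:=\tilde\nu(u,\vph_x)>0$ satisfies $\sigma(u,\vph_x)\ge c$. Combined with (\ref{eq:sunu}), this yields the flatness identity $\tilde\nu(u,\vph_x)=\sigma(u,\vph_x)$ for every $u$.

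The main local ingredient is a \emph{pointwise sub-mean-value inequality at every base point}: for any $z\in\omega$ and any $s\in\R$ with $B_s^{\vph_z}\Subset\omega$,
\[
u(z)\le \tau^{-1}\mu_s^{\vph_z}(u).
\]
Indeed, by (\ref{eq:f1}) and (\ref{eq:f4}) we have $(dd^c\vph_z)^n=\tau\delta_z$ on $\omega$, so the Lelong--Jensen--Demailly formula (\ref{eq:LJDf}) applied to $u$ and $\vph_z$ reduces to $\mu_s^{\vph_z}(u)-\tau u(z)=\int_{-\infty}^s\int_{B_t^{\vph_z}}dd^cu\wedge(dd^c\vph_z)^{n-1}\,dt\ge 0$.

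Next, the ultrametric condition (\ref{eq:f3}), together with its consequence (\ref{eq:minus}) $\vph_z(x)\le K+\vph_x(z)$, yields a \emph{uniform sandwich of pseudo-balls}: whenever $\vph_x(z)\le r-K$,
\[
B_{r-K}^{\vph_x}\subset B_r^{\vph_z}\subset B_{r+2K}^{\vph_x}.
\]
For the right-hand inclusion, if $\vph_z(w)<r$, then (\ref{eq:f3}) applied with $(x,y,z)$ replaced by $(x,w,z)$ gives $\vph_x(w)\le K+\max\{\vph_z(w),\vph_z(x)\}\le K+r$; the left-hand inclusion is symmetric.

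Choosing the matched level $s=\vph_x(z)+K$, the sandwich confines both $\mu_s^{\vph_z}$ and $\mu_s^{\vph_x}$ to a thin pseudo-shell around $x$. One then aims to establish the comparison
\[
\mu_s^{\vph_z}(u)\le \mu_s^{\vph_x}(u)+C
\]
with $C$ independent of $z$ close to $x$. Combined with the asymptotic $\mu_s^{\vph_x}(u)=sc\tau+O(1)$ as $s\to -\infty$ from (\ref{eq:nuph}), this gives
\[
u(z)\le \tau^{-1}\mu_s^{\vph_z}(u)\le cs+O(1)=c\,\vph_x(z)+O(1)\quad\text{as }z\to x,
\]
which is exactly $\sigma(u,\vph_x)\ge c$.

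The main obstacle is the comparison $\mu_s^{\vph_z}(u)\le\mu_s^{\vph_x}(u)+O(1)$, which compares two swept-out Monge--Amp\`ere integrals defined by different weights. The sandwich restricts both measures to essentially the same annular region, and the shared mass $\tau$ from (\ref{eq:f4}) keeps their total masses equal; translating this geometric proximity into the desired quantitative comparison for the $u$-integrals will require a careful argument combining maximality of both $\vph_x$ and $\vph_z$, integration by parts on the intermediate shell $B_{r+2K}^{\vph_x}\setminus B_{r-K}^{\vph_x}$, and the convexity of $r\mapsto\mu_r^\vph(u)$ that follows from (\ref{eq:LJDf}).
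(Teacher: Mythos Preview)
Your overall architecture matches the paper's: fix $x$, take an arbitrary point $z$ (the paper calls it $y$) on a deep level set of $\vph_x$, use the sub-mean-value inequality $u(z)\le\tau^{-1}\mu^{\vph_z}(u)$ coming from $(dd^c\vph_z)^n=\tau\delta_z$, and then transfer this to a bound involving $\mu_r^{\vph_x}(u)$, whose asymptotic slope is $\nu(u,\vph_x)$. Your sandwich of pseudoballs is correct and is essentially the content of the paper's estimates (\ref{eq:genupper})--(\ref{eq:uppers}).

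The genuine gap is exactly the step you flag as ``the main obstacle'': the comparison between $\mu_s^{\vph_z}(u)$ and $\mu_s^{\vph_x}(u)$. The tools you list (integration by parts on the shell, convexity of $r\mapsto\mu_r^\vph(u)$) are not enough, because the two swept-out measures live on \emph{different} pseudospheres $S_s^{\vph_z}$ and $S_s^{\vph_x}$, and there is no a priori reason their $u$-integrals should differ by a bounded additive constant. The paper resolves this by a different device: instead of comparing $\vph_z$ directly with $\vph_x$, it replaces $\vph_z$ by its Green--Zahariuta function $G$ on the \emph{fixed} ball $B_r^{\vph_x}$. Now $G$ and $\vph_x-r$ both vanish on $\partial B_r^{\vph_x}$, and a gluing argument (using your sandwich estimates) shows $M^{-1}G\le\vph_x-r$ on $B_r^{\vph_x}\setminus B_t^{\vph_x}$ with $M=(3K+t-r)/(t-r)$. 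Demailly's comparison theorem for swept-out Monge--Amp\`ere measures \cite[Thm.~3.8]{D5} then gives the \emph{multiplicative} bound $d\mu_r^{\vph_x}\le M^{-n}\,d\mu^G$, hence $u(z)\le\mu^G(u)\le M^n\mu_r^{\vph_x}(u)$. Choosing $t=(1+\epsilon)r$ makes $M\to 1$ as $r\to-\infty$, yielding $\sigma(u,\vph_x)\ge(1-\epsilon)^n(1+\epsilon)^{-1}\nu(u,\vph_x)$ for every $\epsilon>0$.

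So the missing idea is twofold: pass to the Green function of $\vph_z$ on a $\vph_x$-ball so that both weights exhaust the same domain, and then invoke Demailly's comparison theorem to get a multiplicative (not additive) measure comparison with constant tending to $1$. (Incidentally, your opening reference to Corollary~\ref{cor:extr} is a red herring: neither you nor the paper actually uses it; both argue $\sigma\ge\tilde\nu$ directly.)
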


\begin{proof} Fix any $x\in\omega$; for brevity, we denote $\vph_x$ just by $\vph$. We can assume $\tau=\tau_\vph=1$; then,
in view of (\ref{eq:sunu}), it is only the relation
$\suph\ge\nuph$ to be proved.

Let $r<0$ be such that $B_r^\vph=\{y:\vph(y)<r\}\subset\omega$ and let
$t<r-3K$. Fix any $y\in\omega$ with $\vph(y)=t$. Then relation
(\ref{eq:f3})  implies \beq\label{eq:genupper} \vph_y(z)\le
K+\max\,\{\vph(z),t\},\quad z\in B_r^\vph.\eeq In particular,
for all $z$ with $t<\vph(z)\le r$ we get $ \vph_y(z)\le
K+\vph(z)$. On the other hand, for $z$ with $\vph(z)>2K+t$
relations (\ref{eq:f3}) and (\ref{eq:minus}) give
$$\vph(z)\le K+\max\,\{\vph_y(z),\vph_y(0)\}\le
K+\max\,\{\vph_y(z),K+t\},$$ which implies $\vph_y(z)\ge
\vph(z)-K$. Therefore $ \vph(z)-K\le\vph_y(z)\le
\vph(z)+K$ near the boundary of $B_r^\vph$; in particular,
\beq\label{eq:boundest} r-K\le\vph_y(z)\le r+K,\quad z\in
S_r^\vph.\eeq Furthermore, on a neighbourhood of $S_t^\vph$ we have,
in view of (\ref{eq:genupper}), \beq\label{eq:uppers} \vph_y(z)\le
2K+t.\eeq

Let $G$ denote the Green--Zahariuta function of $B_r^\vph$ with the
singularity $\vph_y$. Then $G(z)\le \vph_y(z)+O(1)$ near $y$ and
$$(dd^cG)^n=(dd^c\vph_y)^n=\delta_y.$$ Since the
function $\vph_y$ is maximal on $B_r^\vph\setminus \{y\}$, relation
(\ref{eq:boundest}) implies $ G\le \vph_y-r+K$ in $B_r^\vph$;
on $S_t^\vph$ we have then, by (\ref{eq:uppers}), $ G\le
3K+t-r$. Therefore \beq\label{eq:mbound} G(z)\le
M(\vph(z)-r),\quad z\in S_t^\vph,\eeq where
$M=M(r,t)=(3K+t-r)(t-r)^{-1}$.

We set
$$ \psi(z)=\left\{\begin{array}{ll}\vph(z)-r, &
\mbox{$z\in B_t^\vph$}\\
\max\,\{\vph(z)-r, M^{-1}G(z)\}, & \mbox{$z\in B_r^{\vph}\setminus
B_t^\vph$.}\end{array}\right. $$Due to (\ref{eq:mbound}), $\psi\in
PSH^-(B_r^\vph)$. Then it is dominated by the Green--Zahariuta
function for $B_r^\vph$ with the singularity $\vph$, that is, by
$\vph-r$, and so,
$$ M^{-1}G(z)\le
\vph(z)-r,\quad z\in B_r^{\vph}\setminus B_t^\vph.$$ By Demailly's
comparison theorem \cite[Thm.~3.8]{D5} for the swept out Monge-Amp\`ere measures, this implies the inequality $d\mu^{\vph-r}\le
M^{-n}d\mu^G$ and thus, as $d\mu^{\vph-r}=d\mu_r^\vph$,
\beq\label{eq:compar} d\mu_r^{\vph}\le M^{-n}d\mu^G.\eeq

The Lelong--Jensen--Demailly formula (\ref{eq:LJDf})
shows that for any function $u$ plurisubharmonic in a
neighbourhood of $\overline{B_r^\vph}$ and every $\tau<0$,
$$\mu^G(u)\ge \mu_\tau^G(u)\ge \int_{B_\tau^G}u(dd^c
G)^n=u(y).$$ Assuming $u$ negative in $B_r^\vph$,
inequality (\ref{eq:compar}) gives then
$$ \mu_r^\vph(u)\ge   M^{-n}
u(y)$$ and, since the only condition on $y$ is that $\vph(y)=t$, $$
\sup\,\{u(z):\: z\in B_t^\vph\}\le M^{n}\mu_r^\vph(u).$$

Now let $t=(1+\epsilon)r$, $\epsilon>0$, then $M=1+3K
\epsilon^{-1}r^{-1}$ satisfies $M\ge 1-\epsilon$ for all $r<
-3K\epsilon^{-2}$, so
$$ \sup\,\{u(z):\: z\in B_{(1+\epsilon)r}^\vph\}\le (1-\epsilon)^{n}
\mu_r^\vph(u)$$ and, by the definition of the relative type and
(\ref{eq:nuph}),
$$\suph\ge (1-\epsilon)^{n}(1+\epsilon)^{-1}
\nuph.$$ Letting $\epsilon\to 0$ completes the proof. \end{proof}

\medskip

\begin{example}\label{ex:11} It is easy to see that the weights $\vph=\phi_{a,0}$
(\ref{eq:phax}) satisfy (\ref{eq:sum}) with $K=(\min
a_k)^{-1}\log 2$ and therefore are flat. In particular, this
recovers Kiselman's result that the directional Lelong number can be
calculated by means of both the maximal and mean values.
\end{example}

\begin{example}\label{ex:12}  An almost homogeneous weight $\vph\in W_x$ is flat if and only if
its indicator $\Psi_{\vph,x}$ is simplicial, which follows from
Theorem~\ref{theo:ahw}. This does not hold true without the homogeneity assumption.
\end{example}

\begin{example}\label{ex:13}  Let $f$ be an irreducible holomorphic function on a neighbourhood
of $0\in\C^2$ such that $f(0)=0$ and $\{f=0\}$ is transverse to
$\{z_1=0\}$. By \cite[Prop.~3.6 and 3.9]{FaJ}, the weight
$\vph=\log\max\{|z_1|^s,|f|\}$ is flat for any $s\ge {\rm mult}_0f$.
We do not know if this can be deduced from Theorem~\ref{theo:equality}.
\end{example}

\begin{example}\label{ex:14}  If $F$ is a biholomorphic mapping between neighbourhoods of $x$ and $y$, and if a weight $\vph\in FW_y$, then the weight $\psi=F^\ast\vph\in FW_x$.
\end{example}

\section{Open questions}\label{sec:open}

Here we would like to mention a few questions that are, as we believe, quite important in understanding the nature of plurisubharmonic singularities.

\medskip
{\bf 1.}  {\sl Do there exist weights $\vph$ such that $d_\vph\equiv0$?}

\medskip
{\bf 2.} As pointed out in Example \ref{ex:12}, solutions to the extremal problem for homogeneous weights are exactly flat weight functions. {\sl Is $d_\vph$, for any  weight $\vph$, flat?}
It would be interesting to have an answer even in the case of weights with analytic singularities.

\medskip

{\bf 3.} Flatness means some regularity of the weight. What kind of regularity is it? Specifically, {\sl are flat weights tame? asymptotically  analytic? Do they have finite {\L}ojasiewicz exponent?} All known examples of flat weights have analytic or tame singularities.

\medskip
{\bf 4.} In Section~\ref{sec:tame} we presented a procedure for constructing the extremal functions for weights with analytic singularities. It rests on finding best plurisubharmonic minorants for certain tame weights, which makes the procedure somewhat implicit. {\sl Is it possible to describe explicitly the extremal functions for weights with analytic singularities?} One more question, {\sl do such extremal functions have analytic singularities? }

\medskip
{\bf 5.} The construction of the extremal function for analytic singularities $\phi$ is based on elementary representing weights $\phi_i$ for divisorial (or Rees) valuations. Such valuations play central role in investigation of singularities, as well as in many other problems of algebraic geometry and commutative algebra. {\sl Are elementary representing weights analytic? How are the asymptotics of $\phi_i$ related to the asymptotic of the weight $\phi$?}

\medskip
{\bf 6.} {\sl What are flat weights with analytic singularities? How can they be explicitly described?} So far, the only known flat analytic weights come from Examples \ref{ex:11} and \ref{ex:13}, modulo all holomorphic coordinate changes (as noticed in Example \ref{ex:14}).

\medskip
{\bf 7.} It was shown in Corollary \ref{cor:trop} that the condition $$\nu(\max_iu_i,\vph)=\min_i\nu(u_i,\vph)$$ for all $u_i$ implies flatness of $\vph$.
On the other hand, any flat weight $\vph$ satisfies
$$\sigma\left(\sum_iu_i,\vph\right)=\sum_i\sigma(u_i,\vph)$$
{\sl Is it a characteristic property for flat weights as well?}

\bigskip
{\it Acknowledgement.} The author is grateful to the Mathematics Department of Purdue University where part of the work was done.


\begin{thebibliography}{11}

\bibitem{BA}
{C. Bivi\`a-Ausina}, {Joint reductions of monomial ideals and multiplicity of complex analytic maps}, Math. Res. Lett. \textbf{15} (2008), no. 2, 389--407.

\bibitem{BFaJ}
{S. Boucksom, C. Favre and M. Jonsson}, {Valuations and
plurisubharmonic singularities}, Publ. Res. Inst. Math. Sci. \textbf{44} (2008), no. 2, 449--494.

\bibitem{D3}
{J.-P. Demailly}, {Mesures de Monge-Amp\`ere et
caract\'erisation g\'eom\'etrique des vari\'et\'es alg\'ebriques
affines}, M\'em. Soc. Math. France (N. S.) \textbf{19} (1985), 1-124.


\bibitem{D1}
{J.-P. Demailly}, {Nombres de Lelong g\'en\'eralis\'es,
th\'eor\`emes d'int\'egralit\'e et d'analycit\'e,} Acta Math. \textbf{159} (1987), 153--169.

\bibitem{D5}
{J.-P. Demailly}, {Mesures de Monge-Amp\`ere et mesures
plurisousharmoniques}, Math. Z. \textbf{194} (1987), 519--564.

\bibitem{D2}
{J.-P. Demailly}, {Regularization of closed positive currents
  and intersection theory}, J. Algebraic Geometry \textbf{1} (1992), 361--409.

\bibitem{D}
{J.-P. Demailly}, { Monge-Amp\`ere operators, Lelong numbers
and intersection theory,} Complex Analysis and Geometry (Univ.
Series in Math.), ed. by V. Ancona and A. Silva, Plenum Press, New
York 1993, 115--193.

\bibitem{D8}
{J.-P. Demailly}, {Estimates on Monge-Amp\`ere operators derived from a local algebra inequality}, preprint at http://arxiv.org/abs/0709.3524v2.


\bibitem{FaJ}
{C. Favre and M. Jonsson}, {Valuative analysis of planar
plurisubharmonic functions}, Invent. Math. \textbf{162} (2005),
271--311.

\bibitem{Kis1}
{C.O. Kiselman,} {Densit\'e des fonctions
plurisousharmoniques}, Bull. Soc. Math. France \textbf{107} (1979),
295--304.

\bibitem{Kis2}
{C.O. Kiselman}, {Un nombre de Lelong raffin\'e}, In:
S\'eminaire d'Analyse Complexe et G\'eom\'etrie 1985-87, Fac. Sci.
Monastir Tunisie 1987, 61--70.

\bibitem{LeR}
{P. Lelong and A. Rashkovskii}, {Local indicators for
plurisubharmonic functions}, J. Math. Pures Appl. \textbf{78} (1999),
233--247.


\bibitem{R1}
{A. Rashkovskii}, {Newton numbers and residual measures of
  plurisubharmonic functions}, Ann. Polon. Math. \textbf{75} (2000),
no. 3, 213-231.

\bibitem{R4}
{A. Rashkovskii}, {Lelong numbers with respect to regular
plurisubharmonic weights}, Results Math. \textbf{39} (2001), 320-332.

\bibitem{R7}
{A. Rashkovskii}, {Relative types and extremal problems for
plurisubharmonic functions}, Int. Math. Res. Not. \textbf{2006} (2006),
Article ID 76283, 26 p.


\bibitem{T}
{B. Teissier}, {Cycles \'evanescents, sections planes et conditions de Whitney}, Singularit\'es \`a Carg\`ese, Asterisque \textbf{7-8} (1973), 285--362.

\bibitem{Za0}
{V.P. Zahariuta}, {Spaces of analytic functions and maximal
plurisubharmonic functions.} D.Sci. Dissertation, Rostov-on-Don,
1984.

\bibitem{Za} {V.P. Zahariuta}, {Spaces of analytic functions and
Complex Potential Theory}, Linear Topological Spaces and Complex
Analysis \textbf{1} (1994), 74--146.



\end{thebibliography}
\end{document}